\newcolumntype{L}[1]{>{\raggedright\let\newline\\\arraybackslash\hspace{0pt}}m{#1}}
\newcolumntype{C}[1]{>{\centering\let\newline\\\arraybackslash\hspace{0pt}}m{#1}}
\newcolumntype{R}[1]{>{\raggedleft\let\newline\\\arraybackslash\hspace{0pt}}m{#1}}
\newtheorem{theorem}{\textbf{Theorem}}
\newtheorem{lemma}{\textbf{Lemma}}
\newtheorem{proposition}{Proposition}
\newtheorem{corollary}{Corollary}
\newenvironment{proof}[1][Proof]{\begin{trivlist}
\item[\hskip \labelsep {\bfseries #1}]}{\end{trivlist}}
\newcommand{\diag}[1]{$\text{diag}\left(#1\right)$}
\renewcommand\paragraph{\@startsection{paragraph}{4}{\z@}%
            {-2.5ex\@plus -1ex \@minus -.25ex}%
            {1.25ex \@plus .25ex}%
            {\normalfont\normalsize\bfseries}}
\begin{document}

\title{Restricted Low-Rank Approximation via ADMM}

\author{Ying Zhang \\zy013@ie.cuhk.edu.hk}


\maketitle

\begin{abstract} 
\large{
The matrix low-rank approximation problem with additional convex constraints can find many applications and has been extensively studied before. However, this problem is shown to be nonconvex and NP-hard; most of the existing solutions are heuristic and application-dependent. In this paper, we show that, other than tons of application in current literature, this problem can be used to recover a feasible solution for SDP relaxation. By some sophisticated tricks, it can be equivalently posed in an appropriate form for the Alternating Direction Method of Multipliers (ADMM) to solve. The two updates of ADMM include the basic matrix low-rank approximation and projection onto a convex set. Different from the general non-convex problems, the sub-problems in each step of ADMM can be solved exactly and efficiently in spite of their non-convexity. Moreover, the algorithm will converge exponentially under proper conditions. The simulation results confirm its superiority over existing solutions. We believe that the results in this paper provide a useful tool for this important problem and will help to extend the application of ADMM to the non-convex regime.}
\end{abstract}
\newpage
\subsection*{Notations and Operators}

Vectors and matrices are denoted by boldface lower and upper case letters respectively. The set of real and natural numbers are represented by $\mathbb{R}$ and $\mathbb{N}$. The set of vectors and matrices with proper sizes are denoted as $\mathbb{R}^n,\mathbb{R}^{m\times n}$. The vectors are by default column vectors and the $i^{\text{th}}$ entry of vector $\mathbf{x}$ is denoted as $\mathbf{x}_i$. The entry of matrix $\mathbf{A}$ in the $i^{\text{th}}$ row and $j^{\text{th}}$ column is denoted as $\mathbf{A}_{i,j}$. The superscript $\left(\cdot\right)^T$ stands for transpose.

$\|\cdot\|_p$ denotes the $p$-norm of a vector ($p\geq 1$), \textit{i.e.}, $$\|\mathbf{x}\|_p = \left(\sum_{i}|\mathbf{x}_i|^p\right)^{\frac{1}{p}}$$ and $\|\cdot\|_F$ denotes the $F-$norm of a matrix, \textit{i.e.},
$$\|\mathbf{A}\|_F = \left(\sum_{i,j}A_{i,j}^2\right)^{\frac{1}{2}}.$$

To be consistent with Matlab operator, \diag{\mathbf{A}} returns a vector $\bf a$ with $\mathbf{a}_i = \mathbf{A}_{i,i}$ if the input $\mathbf{A}$ is a matrix and \diag{\mathbf{a}} returns a matrix $\mathbf{A}$ with $\mathbf{A}_{i,j} = \begin{cases} \mathbf{a}_i \quad \text{if } i == j,\\0,\quad \text{otherwise}\end{cases}$. $\text{rank}(\mathbf{X})$ returns the rank of matrix $\mathbf{X}$. 
\newpage
\section{Introduction}
There is a common belief that the complexity of the systems we study is up to a limited level and the useful information in our observation is usually sparse. The property of sparsity can be applied to reduce the necessary sampling rate of signal reconstruction in compressed sensing \cite{donoho2006compressed}, to avoid over-fitting in regularized machine learning algorithms \cite{scholkopf2002learning}, to increase robustness against noise in solving linear equations, to train the deep neural network in unsupervised learning \cite{lee2009convolutional} and beyond.

A direct interpretation of sparsity is that there are many zeros entries in the data.
More often, sparsity is reflected not directly by the data we collect, but by the underlying pattern to be discovered. For example, the sparsity of the voice will be more prominent in frequency domain \cite{bofill2001underdetermined}; the sparse pattern of some astronomical or biomedical imaging data will be reviewed by wavelet transformation \cite{starck2010astronomical,unserwavelets}. In recent years, we are able to generate or collect a lot of data samples with many features, and different data samples and features are usually correlated with each other. If we form the data into a matrix, the matrix is often of low rank, which is another representation of sparsity and can be exploited in many applications \cite{fazel2002matrix}. To encourage sparsity, we can incorporate the sparsity interpretation into the objective function as a regularization term, or put a sparsity upper bound as a hard constraint.

The sparsity usually leads to a non-convex problem and the problem is intractable. For example, we can let the number of nonzero component (0-norm) in the data to be upper bounded. However, the function of 0-norm is non-convex.
To tackle the non-convexity, the researchers proposed many solutions, usually belonging to the following two categories.
First, we can use its convex envelop to replace the non-convex function, for example, to use 1-norm to replace 0-norm, and solve the convex problem instead (this method is usually called convex relaxation). And we can study under what condition the convex relaxation is exact or how much performance loss we will suffer due to the relaxation \cite{donoho2006compressed}. Second, we can directly design some heuristic algorithms to solve the non-convex problem based on engineering intuition, like the Orthogonal Matching Pursuit (OMP) algorithm in compressed sensing \cite{needell2009cosamp}. This kind of method usually works well in practice but little theoretical performance guarantee can be obtained.

\subsection{Related work}

The rank of a matrix, as the dimensionality of the smallest subspace to which the data belongs to, is essentially important and the research on it can be traced back to the origin of matrix. In recent years, many problems with low-rank matrix are of our interests. However, its non-convex nature makes the problems generally intractable. In this case, some researchers proposed to use the nuclear norm or log-det function of the matrix to replace the rank function and then solve the relaxed convex problem \cite{recht2010guaranteed,fazel2003log}. In \cite{berman2003completely}, the matrix being of low rank is interpreted as the matrix being factorized into two smaller-sized matrices, then the matrix factorization techniques can be applied.

The alternating direction method of multiplier (ADMM) was invented in the 1970s and is raising its popularity in the era of big data; many large scale optimization problems that arise in practice can be formulated or equivalently posed in a form appropriate for ADMM and distributed algorithms can be obtained \cite{boyd2011distributed}. Many successful applications including consensus and sharing have been found. The theory related to the convergence rate of ADMM is an active ongoing topic and many results for the convex problem are provided in \cite{boyd2011distributed} and the references therein. In practice, we can meet many problems that are non-convex and ADMM algorithm is proposed to tackle them in a heuristic sense, such as optimal power flow problem \cite{you2014non}, matrix factorization \cite{chartrand2012nonconvex}, etc. However, in the non-convex regime, little theoretical guarantee can be obtained. To the best of our knowledge, \cite{magnusson2014convergence, hongconvergence} are among the first attempts to characterize the convergence behavior of ADMM for the general non-convex problems and has limited theoretical performance guarantee.

In this paper, we consider that low-rank approximation problem with additional convex constraints, which is non-convex and NP-hard. By borrowing some tricks from \cite{boyd2011distributed} \cite{you2014non}, we can reformulate the problem into the form that can be solved by ADMM. In each iteration of ADMM, the first update is to solve a convex problem and the second update is to solve a standard low-rank approximation, the optimal solution of which can be obtained by singular value decomposition. Different from the general non-convex problems, the sub-problems in ADMM can be solved efficiently. This approach is motivated by \cite{you2014non}, in which the authors also use ADMM to solve a non-convex problem with similar structures.  
The different thing is that, due to the special objective function we consider, \textit{i.e.}, $\|X-\hat{X}\|_F$, we can show that the first update is a projection onto a convex set and then establish the convergence of primal variable by assuming the convergence of dual variable, which is an appealing result for the ADMM application in non-convex regime.

The remaining part of this paper is organized as follows,

$\vartriangleright$ In Section~\ref{sec:problemformulation}, we introduce the low rank approximation problem with additional convex constraints and show that it can capture the structured low rank approximation and feasible solution recovery of SDP relaxation as two special cases.

$\vartriangleright$ In Section~\ref{sec:algorithm}, we leverage ADMM to design a generic algorithm and shows that each step of this algorithm can be solved efficiently. We also provide some theoretical results to characterize its convergence behavior.

$\vartriangleright$ In Section~\ref{sec:simulation}, we shows the performance of our algorithm by extensive evaluations with synthetic and real-world data.

$\vartriangleright$ Section~\ref{sec:conclusion} is for conclusion and future work.

\section{Restricted Low Rank Approximation}\label{sec:problemformulation}

\subsection{Problem formulation}

In this paper, we are particularly interested in the data-fitting problem and restrict our attention to the low-rank solutions. More precisely, we are given a matrix $\hat{\mathbf{X}}$ and we want to find a low-rank matrix to approximate $\hat{\mathbf{X}}$. The first version of this problem is formulated as the Low Rank Approximation problem (\textbf{LRA}) as follows,
\begin{subequations}
\begin{eqnarray}\label{prob:LRA}
\textbf{LRA}\quad \min_{X}&&\|\hat{X}-X\|_F
\nonumber\\
\textrm{s.t.}
& & \text{rank}(X)\leq K;\\
\text{var.} &&X,\nonumber
\end{eqnarray}
\end{subequations}
where $K$ is an integer to specify the upper bound of the matrix rank.
The problem is non-convex due to the rank constraint but an optimal solution can be given by the well known Eckart-Young-Mirsky Theorem,

\begin{theorem}[Eckart-Young-Mirsky Theorem \cite{eckart1936approximation}]\label{theorem:EYM}
If the matrix $\hat{X}$ admits the singular value decomposition $\hat{X} = U\Sigma V^H$ with
$\Sigma = \text{diag}([\sigma_1,\sigma_2,\cdots,\sigma_n])$ and $\sigma_1\geq\sigma_2\geq \cdots \geq \sigma_n \geq0$, an optimal solution to problem \textbf{LRA} is given by
$$X^* = \sum_{k=1}^K\sigma_k u_kv^H_k.$$
Furthermore, the minimizer is unique if $\sigma_K$ and $\sigma_{K+1}$ are not equal.
\end{theorem}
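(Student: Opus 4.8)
The plan is to exploit the unitary invariance of the Frobenius norm to reduce the problem to the diagonal case, and then solve the diagonal problem by a subspace-projection argument. First I would note that for any unitary $U,V$ and any $X$ we have $\|\hat{X}-X\|_F = \|U\Sigma V^H - X\|_F = \|\Sigma - U^H X V\|_F$, and that $Y := U^H X V$ has the same rank as $X$. Hence, rewriting the optimization in the variable $Y$, the problem is equivalent to minimizing $\|\Sigma - Y\|_F$ over all $Y$ with $\mathrm{rank}(Y)\le K$; once the optimal $Y$ is found, the optimal $X$ is recovered as $UYV^H$. It therefore suffices to prove the theorem for the diagonal target $\Sigma$.

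Next I would establish the lower bound. For a feasible $Y$, let $S=\mathrm{col}(Y)$ be its column space, so that $\dim S = \mathrm{rank}(Y)\le K$, and let $P_S$ denote the orthogonal projector onto $S$. Since every column of $Y$ lies in $S$, the closest matrix to $\Sigma$ whose column space is contained in $S$ is the column-wise projection $P_S\Sigma$, giving $\|\Sigma - Y\|_F \ge \|(I-P_S)\Sigma\|_F$, with equality iff $Y = P_S\Sigma$. Because $\Sigma$ is diagonal with entries $\sigma_j$, a short computation yields
$$\|(I-P_S)\Sigma\|_F^2 = \sum_j \sigma_j^2\left(1 - \|P_S e_j\|^2\right),$$
where $e_j$ is the $j$-th standard basis vector. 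Setting $t_j = \|P_S e_j\|^2 \in [0,1]$ and using $\sum_j t_j = \mathrm{tr}(P_S) = \dim S \le K$, the task becomes maximizing $\sum_j \sigma_j^2 t_j$ over the relaxed polytope $\{\,0\le t_j\le 1,\ \sum_j t_j \le K\,\}$. Since $\sigma_1\ge\cdots\ge\sigma_n$, the maximum of this relaxation is attained by placing the whole budget on the $K$ largest entries, i.e. $t_1=\cdots=t_K=1$, yielding the lower bound $\|\Sigma - Y\|_F^2 \ge \sum_{j>K}\sigma_j^2$. This bound is met with equality by $S=\mathrm{span}(e_1,\dots,e_K)$ and $Y=P_S\Sigma$, which corresponds exactly to $X^* = \sum_{k=1}^K \sigma_k u_k v_k^H$; hence $X^*$ is optimal. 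An even shorter route to the same bound is Weyl's inequality $\sigma_{i+K}(\hat{X}) \le \sigma_i(\hat{X}-X)$, valid whenever $\mathrm{rank}(X)\le K$, summed over $i$.

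Finally, for uniqueness I would analyze the equality conditions under the hypothesis $\sigma_K\ne\sigma_{K+1}$, and this is where I expect the main obstacle to lie. The delicate point is to show that the strict gap $\sigma_K>\sigma_{K+1}$ forces the relaxed maximizer $(t_j)$ to be unique even when there are ties among $\sigma_1,\dots,\sigma_K$. This follows from an exchange argument: at any optimal $(t_j)$, no mass can sit on an index $j'>K$ while some index $j\le K$ remains unsaturated, since shifting mass from $j'$ to $j$ strictly increases $\sum_j \sigma_j^2 t_j$ (using $\sigma_j^2 \ge \sigma_K^2 > \sigma_{K+1}^2 \ge \sigma_{j'}^2$); hence necessarily $t_j=1$ for $j\le K$ and $t_j=0$ otherwise. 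Translating back, $\|P_S e_j\|^2 = 1$ for $j\le K$ forces $e_1,\dots,e_K\in S$, and $\dim S\le K$ then pins down $S=\mathrm{span}(e_1,\dots,e_K)$; combined with the equality condition $Y=P_S\Sigma$ from the projection step, the optimal $Y$, and therefore $X=UYV^H=X^*$, is unique. The care required is to keep the exchange argument valid in the presence of repeated top singular values (where the optimal subspace is still unique) and to observe that when $\sigma_K=\sigma_{K+1}$ this exchange becomes a free rotation within the degenerate block, so uniqueness genuinely fails, confirming that the hypothesis is sharp.
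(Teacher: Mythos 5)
Your proposal is correct, but there is nothing in the paper to compare it against: the paper states the Eckart--Young--Mirsky theorem as a known result with a citation to the original reference and gives no proof of its own. Taken on its own merits, your argument is sound and is one of the standard routes for the Frobenius-norm case: unitary invariance reduces to the diagonal target, the column-space projection gives $\|\Sigma-Y\|_F^2\ge\sum_j\sigma_j^2\bigl(1-\|P_Se_j\|^2\bigr)$, and the trace constraint $\sum_j\|P_Se_j\|^2=\dim S\le K$ turns the lower bound into a linear program whose solution is read off from the ordering of the $\sigma_j$. The uniqueness analysis is also right, and you correctly identify where the hypothesis $\sigma_K\ne\sigma_{K+1}$ enters; two small points worth making explicit are (i) that after the exchange argument kills all mass on indices $j'>K$, concluding $t_j=1$ for $j\le K$ uses $\sigma_j\ge\sigma_K>\sigma_{K+1}\ge0$, so none of the top $K$ weights can vanish, and (ii) that $\|P_Se_j\|=1$ forces $e_j\in S$ because a projection is norm-nonincreasing with equality only on its range. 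If $\hat{X}$ is rectangular the bookkeeping changes slightly (the diagonal has $\min(m,n)$ entries and $\operatorname{tr}(P_S)$ is summed over all columns), but the argument is unaffected. The Weyl-inequality shortcut you mention gives the same lower bound more quickly but does not by itself yield the uniqueness statement, so the projection argument is the right one to keep.
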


This method is called truncated singular value decomposition (SVD). We can see that even though \textbf{LRA} is non-convex, it can be solved in polynomial time because SVD can be computed in polynomial time. 

In this paper, we put some additional constraints on \textbf{LRA} and call the new problem as Restricted Low Rank Approximation problem (\textbf{RLRA}). The problem is casted as follows,
\begin{subequations}
\begin{eqnarray}\label{prob:RLRA}
\textbf{RLRA}\quad \min_{X}&&\|\hat{X}-X\|_F^2
\nonumber\\
\textrm{s.t.}
& & \text{rank}(X)\leq K;\\
&　& g(X)\leq 0;\\
\text{var.}
& & X,\nonumber
\end{eqnarray}
\end{subequations}
where $g(X)$ is a convex function, requiring that the approximation $X$ is located in a convex set.

The difference between \textbf{LRA} and \textbf{RLRA} is the constraint $g(\mathbf{X})\leq 0$. The convexity of $g(\mathbf{X})$ will makes the readers feel that the difference is not significant, since the convex things are usually treated as easy in most literature. However, this is not true. The new constraint makes the truncated SVD not applicable to solve \textbf{RLRA} \footnote{It is easy to imagine that the solution by truncated SVD may not respect $g(\mathbf{X})\leq 0$.}. More importantly, \textbf{RLRA} is believed to be NP-hard \cite{markovsky2008structured}. There is no hope to always achieve the optimal solution in polynomial time unless P=NP.

There are several proper ways to tackle this non-convex problem. Firstly, we can replace the rank constraints by some constraints on the nuclear norm of $\mathbf{X}$ \cite{fazel2002matrix} and solve the relaxed problem instead; secondly, we can use the kernel representation, for example,to equivalently transform the rank constraint to the fact that the matrix can be factorized into two smaller sized matrices and then apply the matrix factorization technique \cite{markovsky2008structured}. In this paper, we propose to use ADMM, which has found its success in many problems \cite{boyd2011distributed}, to solve this problem.


\subsection{Two specific instances}\label{sec:twocases}

The generic problem $\textbf{RLRA}$ can be used to solve many problems and we review two of them in this section to provide more motivations.

\subsubsection{Low-rank approximation with linear structures}
The data fitting task can be formulated as a structured low-rank approximation problem (\textbf{SLRA})if the system generating the data is of linear model and bounded complexity \cite{markovsky2008structured}. Many applications can be found in system theory, signal processing, computer algebra, etc \cite{markovsky2008structured}.

We denote $\mathcal{A}$ a set of matrices with specific \textit{affine} structures, for example, Hankel matrix, Toplitiz matrix, \textit{etc}. The problem is formally given as follows,
\begin{subequations}
\begin{eqnarray}\label{prob:SLRA}
\textbf{SLRA} \min_{x}&&\|X-\hat{X}\|_F^2
\nonumber\\
\textrm{s.t.}
& & \text{rank}(X) \leq K, \label{eq:rank}\\
& & X\in \mathcal{A}, \\
\text{var.}
& & X\in\mathbb{C}^{m\times n}.\nonumber
\end{eqnarray}
\end{subequations}

Different applications lead to different requirements of $\mathcal{A}$ \cite{markovsky2008structured}. Without diving into to the details, we list some of them here,
\begin{itemize}
\item Hankel matrix: approximate realization, model reduction, output error identification;
\item Sylvester matrix: Pole place by low-order controller, approximate common divisor;
\item Hankel\&Toeplitz matrix: Harmonic retrieval;
\item Non-negative matrix: image mining, Markov chains.
\end{itemize}

Some heuristic or local-optimization based algorithms for different problems are summarized in \cite{markovsky2008structured}.

\subsubsection{Feasible solution recovery of SDP relaxation}

Many communication problems, like multicast downlink transmit beamforming problem, can be formulated as a  quadratically constrained quadratic program \textbf{QCQP}, which is non-convex and generally NP-hard,
\begin{subequations}
\begin{eqnarray}\label{prob:QCQP}
\textbf{QCQP}\quad \min_{x}&&x^HCx
\nonumber\\
\textrm{s.t.}
& & x^HF_ix\geq g_i, i = 1,...,k \nonumber\\
& & x^HH_ix = l_i, i = 1,...,m \nonumber\\
& &\text{var.} x\in \mathbb{R}^n,\nonumber
\end{eqnarray}
\end{subequations}
and it can be shown to be equivalent to problem \textbf{SDP-QCQP}.

\begin{subequations}
\begin{eqnarray}\label{prob:QCQP}
\textbf{QCQP-SDP}\quad \min_{x}&&\text{trace}(C\cdot X)
\nonumber\\
\textrm{s.t.}
& & \text{trace}(F_i\cdot X)\geq g_i, i = 1,...,k \label{eq:QCQP10}\\
& & \text{trace}(H_i\cdot X) = l_i, i = 1,...,m \label{eq:QCQP20}\\
& & \text{rank}(X) = 1, \label{eq:rank0}\\
\text{var.}
& & X\succeq 0, X\in\mathbb{C}^{n\times n}.\nonumber
\end{eqnarray}
\end{subequations}
By dropping the rank-1 constraint \eqref{eq:rank0} we can have a standard SDP problem (denoted as \textbf{QCQP-SDPR}) and it can be solved by standard solver like CVX \cite{grant2008cvx}. This technique is called SDP relaxation and more details can be found in \cite{luo2010semidefinite}.

If the optimal solution of the relaxed problem, denoted as $\hat{X}$, happens to respect the rank-1 constraint, then the optimal solution of the problem \textbf{QCQP}, denoted as $x^*$, can be obtained by the fact that $\hat{X} = x^*x^{*T}$. In this case, the SDP relaxation is called exact relaxation, and the original \textbf{QCQP} problem can be solved efficiently and exactly even though it is non-convex. For the optimal power flow problem, the exact relaxation always happens if some conditions hold \cite{low2013convex} \cite{lavaei2012zero}.

However, the rank of $\hat{X}$ is more often larger than 1 \footnote{Otherwise SDP relaxation will always solve a non-convex problem exactly, which is not true} and the relaxation is not exact. In this case, another step is needed to recover a \textit{good} solution if we do not want to waste pervious effort. The most direct way is to find the rank-1 matrix that is closest to $\hat{\mathbf{X}}$, \textit{i.e.}, solving \textbf{LRA} with $K=1$ by truncated SVD. This approach is suggested in \cite{zhu2011estimating} for the state estimation problem of power system. However, we want to point out that this approach is not guaranteed to produce a feasible solution of the original problem, because the rank-1 matrix by truncated SVD may not satisfy the other constraints of \textbf{SDP-QCQP} like \eqref{eq:QCQP10} \eqref{eq:QCQP20}. We provide a successful case in Fig~\ref{fig:works} and an unsuccessful case in Fig~\ref{fig:fails} for a more clear illustration; $\tilde{\mathbf{X}}$ is the optimal solution of \textbf{QCQP-SDPR} and $\mathbf{X}_1$ is the rank-1 approximation of $\tilde{\mathbf{X}}$.

\begin{figure}
\begin{minipage}{0.45\columnwidth}
\includegraphics[width=0.9\columnwidth]{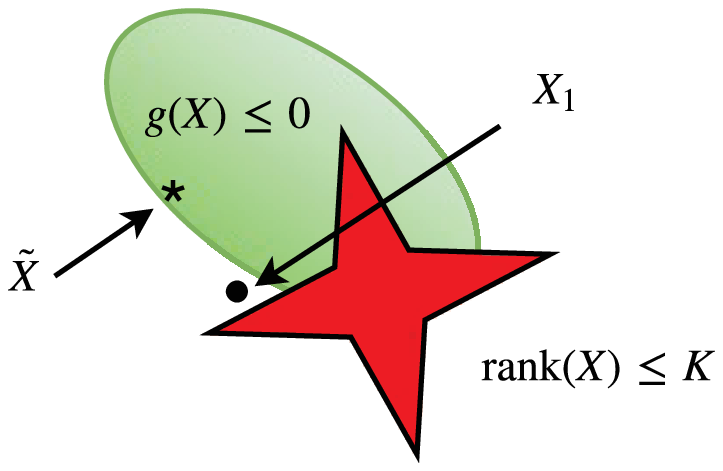}\\
\caption{An example that truncated SVD works for \textbf{FSR-SDPR}}\label{fig:works}
\end{minipage}
\begin{minipage}{0.45\columnwidth}
\includegraphics[width =0.9\columnwidth]{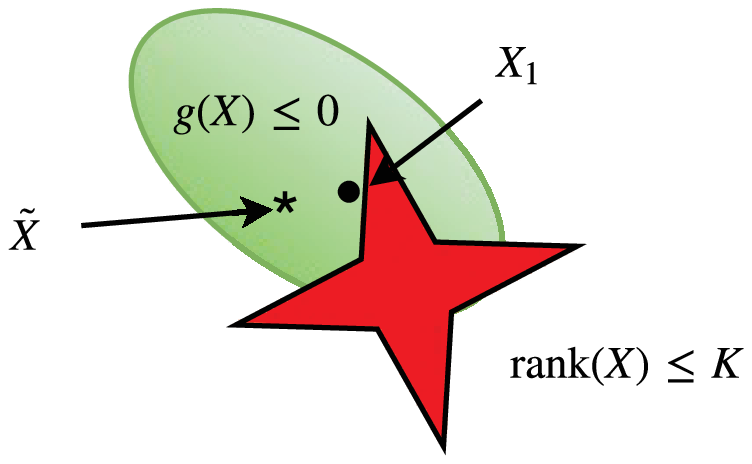}\\
\caption{An example that truncated SVD fails to work for \textbf{FSR-SDPR}}\label{fig:fails}
\end{minipage}
\end{figure}

Some heuristic and problem-dependent algorithms are proposed in \cite{luo2010semidefinite}. For example, if it is required that $\|\mathbf{x}\|_2\leq 1$ for \textbf{QCQP}, we can first obtain a solution $\hat{\mathbf{x}}$ from the rank-1 matrix by truncated SVD and then normalize the solution by $\frac{\hat{\mathbf{x}}}{\|\hat{\mathbf{x}}\|_2}$ if $\|\hat{\mathbf{x}}\|_2 > 1$.

Here we propose a generic method to solve the problem \textbf{FSR-SDPR} (Feasible Solution Recovery of SDP Relaxation) by finding a rank-1 matrix from which we can recover a feasible solution to \textbf{QCQP}.
\begin{subequations}
\begin{eqnarray}\label{prob:QCQP}
\textbf{FSR-SDPR}\quad \min_{x}&&\|X-\hat{X}\|_F
\nonumber\\
\textrm{s.t.}
& & g(\mathbf{X})\leq 0,\nonumber\\
& & \text{rank}(X) = 1, \nonumber\\
\text{var.}
& & X\succeq 0, X\in\mathbb{C}^{n\times n},\nonumber
\end{eqnarray}
\end{subequations}
where $g(\mathbf{X})\leq 0$ represents the original convex constraints like \eqref{eq:QCQP10} and \eqref{eq:QCQP20}.

The rationale behind this approach is that we want to find the point closest to $\hat{X}$ in the feasible region. If the objective function, $\text{trace}(C\cdot X)$, is Lipschitz continuous, the performance of the recovered solution is guaranteed to be close to the optimal value of \textbf{QCQP-SDPR}, hence close to the optimal solution of \textbf{QCQP}. It is not difficult to see that solving this problem can be viewed as a special case of \textbf{RLRA} and the solution is far from being trivial.

\section{Algorithm Design}\label{sec:algorithm}
Next we will present how to solve the problem \textbf{RLRA} via the ADMM algorithm.

\subsection{ADMM algorithm}
In this section, we review the basic version of ADMM \cite{boyd2011distributed} to bring all the readers to the same page \footnote{The readers are recommended to read \cite{boyd2011distributed} for more details.}. We present the standard problem that ADMM can solve in \textbf{SP}, in which the objective function is separable and two variables $x,y$ are coupled with each other by a linear constraint.

\begin{subequations}
\begin{eqnarray}\label{prob:ADMM}
\textbf{SP}\quad \min_{x,y}&&f(x)+g(y)
\nonumber\\
\textrm{s.t.}
& & Ax+By = c,\\
\text{var.}
& & x,y.\nonumber
\end{eqnarray}
\end{subequations}

The augmented Lagrange multiplier function of the above problem is given by
\begin{equation*}
L_{\rho}(x,y,\lambda) = f(x)+g(y)+\lambda (Ax+By-c) + \frac{\rho}{2}\|Ax+By-c\|_2^2,
\end{equation*}
the scaled form of which is
\begin{equation}\label{eq:aLagrange}
L_{\rho}(x,y,u) = f(x)+g(y)+\frac{\rho}{2}\|Ax+By-c+u\|_2^2,
\end{equation}
and we will use the scaled form in the sequel unless specified.

In each iteration, ADMM algorithm consists of the following three steps

\begin{itemize}
\item $x$ \textbf{update}:
$$x^{k+1} = \text{argmin}_{x}L_{\rho}(x,y^{k},u^{k}).$$
\item $y$ \textbf{update}:
$$y^{k+1} = \text{argmin}_{y}L_{\rho}(x^{k+1},y,u^{k}).$$
\item $u$ \textbf{update}:
$$u^{k+1} = u^k+Ax^{k+1}+By^{k+1}-c.$$
\end{itemize}

The ADMM algorithm is guaranteed to converge under some conditions, as shown in Theorem~\ref{theorem:ADMM_convex}.

\begin{theorem}[\cite{boyd2011distributed}]\label{theorem:ADMM_convex}
If $f(x)$ and $g(y)$ are closed, proper , convex and the unaugmented Lagrange function $L_0$ has a saddle point, ADMM algorithm is guaranteed to converge in the following sense,
\begin{itemize}
\item The residual $Ax+By-c$ will converge to 0, \textit{i.e.}, the solution $x^k,y^k$ will approach feasibility.
\item The objective value $f(x^k)+g(y^k)$ will converge to the optimality.
\item The dual variable $u$ will converge to the dual optimal point.
\end{itemize}
\end{theorem}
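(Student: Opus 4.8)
The plan is to follow the classical Lyapunov-function argument, since this is exactly the convergence result established in \cite{boyd2011distributed}. Throughout I would work with the unscaled dual variable $\lambda^k = \rho u^k$, so that the dual update reads $\lambda^{k+1} = \lambda^k + \rho r^{k+1}$ with $r^{k+1} = Ax^{k+1} + By^{k+1} - c$ the primal residual. Let $(x^*, y^*, \lambda^*)$ denote a saddle point of $L_0$, whose existence is assumed in the hypotheses, and write $p^* = f(x^*) + g(y^*)$ for the optimal value. The whole argument rests on exhibiting a nonnegative quantity
$$V^k = \frac{1}{\rho}\|\lambda^k - \lambda^*\|_2^2 + \rho \|B(y^k - y^*)\|_2^2$$
that decreases monotonically along the iterations, with a decrease controlled by the residuals.

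First I would derive three inequalities. (i) The saddle-point property $L_0(x^*, y^*, \lambda^*) \le L_0(x^{k+1}, y^{k+1}, \lambda^*)$ together with $Ax^* + By^* = c$ yields $p^* - p^{k+1} \le (\lambda^*)^T r^{k+1}$, where $p^{k+1} = f(x^{k+1}) + g(y^{k+1})$. (ii) Writing the subgradient optimality condition for the $x$-update $x^{k+1} = \text{argmin}_x L_\rho(x, y^k, \lambda^k)$ and substituting the dual update shows that $x^{k+1}$ minimizes $f(x) + (\lambda^{k+1} - \rho B(y^{k+1} - y^k))^T Ax$, which gives a variational inequality comparing $x^{k+1}$ with $x^*$. (iii) Similarly the optimality condition for the $y$-update shows $y^{k+1}$ minimizes $g(y) + (\lambda^{k+1})^T By$, giving a second variational inequality. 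Because $f$ and $g$ are only assumed closed, proper and convex, these conditions must be stated with subdifferentials rather than gradients.

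Next I would add (ii) and (iii) to bound $p^{k+1} - p^*$ from above, add this to (i), and after the algebra the cross terms telescope and recombine exactly into the difference $V^k - V^{k+1}$; the outcome is the descent inequality
$$V^k - V^{k+1} \ge \rho \|r^{k+1}\|_2^2 + \rho \|B(y^{k+1} - y^k)\|_2^2 \ge 0.$$
From here the three conclusions follow in turn. Summing over $k$ telescopes to $\sum_{k}\left(\|r^{k+1}\|_2^2 + \|B(y^{k+1} - y^k)\|_2^2\right) \le V^0/\rho < \infty$, forcing $r^k \to 0$ (primal feasibility) and $B(y^{k+1} - y^k) \to 0$. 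The objective gap is then squeezed: inequality (i) lower-bounds $p^{k+1} - p^*$ by a term that vanishes since $r^k \to 0$, while the combined (ii)--(iii) upper-bounds it by a term built from $r^{k+1}$ and $B(y^{k+1} - y^k)$, which also vanishes; hence $p^{k+1} \to p^*$. Finally $V^k$ is bounded, so $\lambda^k$ is bounded; combined with $r^k \to 0$ one checks that every limit point of $\lambda^k$ satisfies the dual optimality (KKT) conditions, giving convergence of $u^k = \lambda^k/\rho$ to a dual optimal point.

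I expect the main obstacle to be steps (ii)--(iii) and the recombination: correctly turning the subdifferential optimality conditions of the two updates into clean variational inequalities, and then verifying that the cross terms produced when they are summed with (i) assemble \emph{exactly} into $V^k - V^{k+1}$ with no residual slack. This bookkeeping, especially the handling of the coupling through $B(y^{k+1} - y^k)$, is the delicate part; once the descent inequality is in hand, the three stated conclusions are routine consequences.
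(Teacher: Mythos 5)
The paper offers no proof of this theorem; it is quoted verbatim from the cited reference \cite{boyd2011distributed}, and your outline is precisely the standard Lyapunov argument from Appendix A of that work, based on the decreasing quantity $V^k = \tfrac{1}{\rho}\|\lambda^k-\lambda^*\|_2^2 + \rho\|B(y^k-y^*)\|_2^2$ and the three inequalities you list, so the approach is correct and is ``the same'' in the only sense available here. The one loose end is the third bullet: knowing that every limit point of the bounded sequence $\lambda^k$ is dual optimal does not by itself give convergence to a single point; you need to additionally invoke the fact that $V^k$ is nonincreasing with respect to \emph{every} saddle point (Fej\'er monotonicity), which together with the existence of one dual-optimal limit point forces the whole sequence to converge to it.
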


We highlight that under the conditions provided in Theorem~\ref{theorem:ADMM_convex}, the primal variables $x,y$ are not guaranteed to converge. But since the objective value is guaranteed to converge, all the solutions after enough iterations will produce the same results and are equivalently good.

\subsubsection{Some remarks}
We remark some properties of ADMM here.
First, the ADMM algorithm can produce a reasonably accurate solution very fast, but takes more time to generate a high accurate solution. For many machine learning or statistical tasks, the overall performance depends on both problem formulation (including feature engineering and model selection) and parameter estimation (solving the optimization problem). Usually the performance bottleneck comes from the first one and a more accurate solution will not lead to significant performance improvement. In this way, a reasonably accurate solution is totally acceptable and this property of ADMM is appealing. Second, the optimization variable of Problem \textbf{SP} is $\left(x,y\right)$. In conventional algorithm such as gradient descent, $x,y$ are updated simultaneously, but in ADMM, we update $x$ first and use the newly updated $x$ to update $y$. The intuition is that we want the newly updated information to take effect as soon as possible, which is similar to the logic of Gauss-Sidel algorithm. Last but not least, the ADMM algorithm can be implemented in a distributed manner with a parameter server \cite{boyd2011distributed}. 
\subsection{Problem reformulation and applying ADMM}
The original problem formulation in \textbf{RLRA} does not have the form of \textbf{SP} to be readily solved by ADMM. We need to reformulate the problem. Firstly we define two indicator functions as follows,
\begin{equation}
\mathcal{I}(X) = \begin{cases}0,\quad\text{if }\text{rank}(X) \leq K,\\ +\infty,\quad\text{otherwise},\end{cases}
\end{equation}
\begin{equation}
\mathcal{J}(X) = \begin{cases}0,\quad\text{if }g(X) \leq 0,\\ +\infty,\quad\text{otherwise}.\end{cases}
\end{equation}
And we reformulate Problem \textbf{RLRA} equivalent to Problem \textbf{RLRA-ADMM} as,
\begin{subequations}
\begin{eqnarray}\label{prob:RLRA-ADMM}
\textbf{RLRA-ADMM} \quad\min_{X,Y}&&\underbrace{\|X-\hat{X}\|_F+\mathcal{J}(X)}_{X\text{ involved}}+\underbrace{\mathcal{I}(Y)}_{Y\text{ involved}}
\nonumber\\
\textrm{s.t.}
& & X-Y = 0\\
\text{var.}
& & X,Y.\nonumber
\end{eqnarray}
\end{subequations}

The equivalence of the two problems is formally established in Lemma~\ref{lemma:equivalence}.

\begin{lemma}\label{lemma:equivalence}
If Problem \textbf{RLRA} is feasible, then the optimal solutions and optimal objective values of \textbf{RLRA} and \textbf{RLRA-ADMM} are the same.
\end{lemma}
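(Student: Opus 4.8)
The plan is to establish the equivalence by eliminating the auxiliary variable $Y$ and showing that the indicator functions $\mathcal{I}$ and $\mathcal{J}$ exactly encode the two constraints of \textbf{RLRA}. First I would observe that the coupling constraint $X - Y = 0$ forces $Y = X$ at every point where the \textbf{RLRA-ADMM} objective is finite, so the term $\mathcal{I}(Y)$ may be replaced by $\mathcal{I}(X)$. After this substitution the objective of \textbf{RLRA-ADMM} collapses to the single-variable extended-real-valued function $\|X - \hat{X}\|_F + \mathcal{J}(X) + \mathcal{I}(X)$, which is finite precisely when both $\text{rank}(X) \le K$ and $g(X) \le 0$ hold, and equals $+\infty$ otherwise.

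Next I would make the feasible-set correspondence precise. Let $\mathcal{F} = \{X : \text{rank}(X) \le K,\ g(X) \le 0\}$ denote the feasible set of \textbf{RLRA}. The map $X \mapsto (X, X)$ is a bijection between $\mathcal{F}$ and the set of pairs $(X, Y)$ satisfying $X - Y = 0$ that carry a finite \textbf{RLRA-ADMM} objective. On this common domain the reduced \textbf{RLRA-ADMM} objective equals $\|X - \hat{X}\|_F$, while the \textbf{RLRA} objective equals $\|\hat{X} - X\|_F^2$. Since $\|X - \hat{X}\|_F = \|\hat{X} - X\|_F$ by symmetry of the norm, and since $t \mapsto t^2$ is strictly increasing on $[0, \infty)$, minimizing the square-root form and the squared form over the same set $\mathcal{F}$ produces an identical set of minimizers. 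The hypothesis that \textbf{RLRA} is feasible guarantees $\mathcal{F} \neq \emptyset$, so both problems optimize over the same nonempty domain and their argmin sets coincide.

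I expect the main care to lie in handling the extended-real-valued (indicator) objective rigorously: I must argue that no candidate minimizer of \textbf{RLRA-ADMM} can violate $X - Y = 0$ or drive either indicator to $+\infty$, which is immediate once feasibility of \textbf{RLRA} supplies a finite reference value, forcing the infimum to be taken over precisely the pairs $(X, X)$ with $X \in \mathcal{F}$. A secondary point worth flagging is the mismatch between the squared objective written in \textbf{RLRA} and the unsquared objective written in \textbf{RLRA-ADMM}: strictly speaking the two \emph{optimal objective values} differ by a square, so the sharp statement is that the problems share the same \emph{optimal solutions}, with equal optimal values under a matching choice of objective. Once these bookkeeping points are settled, the coincidence of minimizers follows directly from the monotonicity argument above.
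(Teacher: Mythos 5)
Your argument is correct, and in fact the paper offers no proof of Lemma~\ref{lemma:equivalence} at all --- it is stated and immediately followed by the application of ADMM --- so there is nothing in the source to compare against beyond the implicit ``substitute $Y=X$ and unfold the indicator functions'' reasoning your write-up makes explicit. Your chain is sound: the coupling constraint $X-Y=0$ lets you replace $\mathcal{I}(Y)$ by $\mathcal{I}(X)$, the sum $\mathcal{J}(X)+\mathcal{I}(X)$ is finite exactly on the feasible set $\mathcal{F}$ of \textbf{RLRA}, feasibility of \textbf{RLRA} rules out the degenerate case where both infima are $+\infty$, and strict monotonicity of $t\mapsto t^2$ on $[0,\infty)$ gives coincidence of the minimizer sets. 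Your secondary observation is also a genuine catch rather than mere bookkeeping: as written, \textbf{RLRA} minimizes $\|\hat{X}-X\|_F^2$ while \textbf{RLRA-ADMM} minimizes $\|X-\hat{X}\|_F$, so the lemma's claim that the \emph{optimal objective values} coincide is literally false (they differ by a square) and holds only for the optimal \emph{solutions}, or for the values after matching the two objectives; the paper is internally inconsistent on this point (it also writes \textbf{LRA} with the unsquared norm and \textbf{RLRA} with the squared one), and your proof correctly isolates the monotone-transformation step as the reason the discrepancy is harmless for the argmin claim.
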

And more importantly, \textbf{RLRA-ADMM} is readily solved by AMDD algorithm. We denote $f(X) = \|X-\hat{X}\|_F+\mathcal{I}(X)$, $g(Y) = J(Y)$ and the augmented function with dual variable $U$ as
$$L_{\rho}(X,Y,U) = f(X)+g(Y)+\frac{\rho}{2}\|X-Y+U\|_F^2.$$
Following the ADMM procedure, we will have the following three updates in each iteration.
\begin{itemize}
\item $X$ \textbf{update}:
$$X^{k+1} = \text{argmin}_{X}L_{\rho}(X,Y^{k},U^{k}).$$
\item $Y$ \textbf{update}:
$$Y^{k+1} = \text{argmin}_{Y}L_{\rho}(X^{k+1},Y,U^{k}).$$
\item $U$ \textbf{update}:
$$U^{k+1} = U^k+X^{k+1}-Y^{k+1}.$$
\end{itemize}

In the next part, we carefully study the details of the algorithm and show that each update can be carried out efficiently even though the problems can be non-convex.
\subsubsection{The subproblems}
The $U$ update is simple and direct. We revisit the other two in this part.
\paragraph{$X$ update}
The optimization problem in $X$ update is equivalent to
\begin{subequations}
\begin{eqnarray*}\label{prob:ADMM}
\textbf{X-MIN}\min_{X}&& \|X-\hat{X}\|_F^2+\frac{\rho}{2}\|X-Y^k+U^k\|_F^2
\nonumber\\
\textrm{s.t.}
& & g(X)\leq 0.\nonumber
\end{eqnarray*}
\end{subequations}

The problem \textbf{X-MIN} is convex and can be solved efficiently. We wit can be viewed as a projection of $\mathbf{X}^{k+1}+\mathbf{U}^k$ onto the convex set $\mathcal{S} = \{\mathbf{Y}|g(\mathbf{Y})\leq 0\}$. It can be solved efficiently or even have closed-form solutions.

\begin{lemma}
The optimal solution of \textbf{X-MIN} can be obtained by solving
\begin{subequations}
\begin{eqnarray*}\label{prob:ADMM}
\min_{X}&& \|X-\frac{1}{1+\frac{\rho}{2}}\left(\hat{X}+\frac{\rho}{2}(Y^k-U^k)\right)\|_F^2
\nonumber\\
\textrm{s.t.}
& & g(X)\leq 0.\nonumber
\end{eqnarray*}
\end{subequations}
which is a projection of $\frac{1}{1+\frac{\rho}{2}}\left(\hat{X}+\frac{\rho}{2}(Y^k-U^k)\right)$ onto the convex set $\{X|g(X)\leq 0\}$.
\end{lemma}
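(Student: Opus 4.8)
The plan is to prove the equivalence by a completing-the-square argument applied to the quadratic objective of \textbf{X-MIN}. Since the constraint $g(X)\le 0$ appears unchanged in both problems, the feasible sets are identical, so it suffices to show that the two objective functions differ only by an additive constant and a strictly positive multiplicative constant; neither of these alters the location of the minimizer over a fixed feasible set.

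First I would expand both squared Frobenius norms using the Frobenius inner product $\langle A,B\rangle = \text{trace}(A^T B)$ together with the identity $\|A-B\|_F^2 = \|A\|_F^2 - 2\langle A,B\rangle + \|B\|_F^2$. Collecting the terms of the \textbf{X-MIN} objective as a function of $X$, the quadratic part contributes a coefficient $1+\frac{\rho}{2}$ on $\|X\|_F^2$, the linear part contributes $-2\langle X,\,\hat{X}+\frac{\rho}{2}(Y^k-U^k)\rangle$, and the remaining terms $\|\hat{X}\|_F^2$ and $\frac{\rho}{2}\|Y^k-U^k\|_F^2$ do not depend on $X$.

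Next I would set $a = 1+\frac{\rho}{2}$ and $Z = \hat{X}+\frac{\rho}{2}(Y^k-U^k)$ and factor out $a$ to complete the square, obtaining that the objective equals $a\bigl\|X-\tfrac{1}{a}Z\bigr\|_F^2 + \gamma$ for a constant $\gamma$ independent of $X$. Because $a = 1+\frac{\rho}{2} > 0$, scaling by $a$ and adding $\gamma$ preserves the argmin, so over the common feasible set the minimizer of \textbf{X-MIN} coincides with the minimizer of $\|X-\tfrac{1}{a}Z\|_F^2$; observing that $\tfrac{1}{a}Z = \frac{1}{1+\frac{\rho}{2}}\bigl(\hat{X}+\frac{\rho}{2}(Y^k-U^k)\bigr)$ recovers exactly the reduced problem in the statement. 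Finally, since $g$ is convex the feasible set $\{X:g(X)\le 0\}$ is convex, and the reduced objective is the squared Frobenius distance to the fixed point $\tfrac{1}{a}Z$, so the reduced problem is precisely the Euclidean projection of that point onto the convex set. The computation is routine; the only points demanding care are verifying that $a$ is strictly positive so that factoring it out does not distort or flip the minimization, and confirming that the cross terms combine into the single perfect square $a\|X-\tfrac{1}{a}Z\|_F^2$ with no residual linear term in $X$.
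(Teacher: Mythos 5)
Your proposal is correct and follows essentially the same route as the paper's own proof: both expand the two squared Frobenius norms (the paper via $\|A\|_F^2 = \mathrm{trace}(AA^H)$, you via the Frobenius inner product), collect the quadratic coefficient $1+\frac{\rho}{2}$ and the linear term, and complete the square to show the objectives differ only by a positive scaling and an additive constant over the unchanged feasible set. Your explicit remarks that $1+\frac{\rho}{2}>0$ and that the constraint set is identical in both problems are sound and, if anything, slightly more careful than the paper's write-up.
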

\begin{proof}
We prove the equivalence between the two problems by showing the linear relationship between the two objective functions. We will use the equation that $\|A\|_F^2 = \text{trace}(AA^H)$ in the proof.

\begin{align*}
&\|X-\hat{X}\|_F^2+\frac{\rho}{2}\|X-Y^k+U^k\|_F^2\\
=& \text{trace}((X-\hat{X})(X-\hat{X})^H)+\frac{\rho}{2}\text{trace}((X-Y^k+U^k)(X-Y^k+U^k)^H)\\
=& \text{trace}\left(\left(1+\frac{\rho}{2}\right)XX^H-2\left(\hat{X}+\frac{\rho}{2}(Y^k-U^k)\right)X^H+C_1\right)\\
=& \left(1+\frac{\rho}{2}\right)\text{trace}\left(\left(X-\frac{1}{1+\frac{\rho}{2}}\left(\hat{X}+\frac{\rho}{2}(Y^k-U^k)\right)\right)
\left(X-\frac{1}{1+\frac{\rho}{2}}\left(\hat{X}+\frac{\rho}{2}(Y^k-U^k)\right)\right)^H\right)\\
&\quad+C_2\\
=& \left(1+\frac{\rho}{2}\right)\|X-\frac{1}{1+\frac{\rho}{2}}\left(\hat{X}+\frac{\rho}{2}(Y^k-U^k)\right)\|_F^2 + C_2.
\end{align*}
The proof is completed.
\end{proof}

\paragraph{$Y$ update}

The optimization problem in $\mathbf{Y}$ update is equivalent to
\begin{subequations}
\begin{eqnarray*}\label{prob:ADMM_Y}
\textbf{Y-MIN}\quad \min_{\mathbf{Y}}&& \frac{\rho}{2}\|\mathbf{X}^{k+1}-\mathbf{Y}+\mathbf{U}^k\|_F^2
\nonumber\\
\textrm{s.t.}
& & \text{rank}(\mathbf{Y})\leq 0.\nonumber
\end{eqnarray*}
\end{subequations}

The above optimization problem is non-convex because the rank constraint is non-convex. Thus the problem is challenging on the first sight. The following lemma shows that its optimal solution can be obtained by truncated SVD.
We summarize the algorithm into Algorithm~\ref{alg:admm-rlra} to end this part.

\begin{algorithm}[!ht]
\protect\caption{ADMM-RLRA: ADMM algorithm for \textbf{RLRA}}
\begin{algorithmic}[1]
\REQUIRE $p_{m}$,$p_{g}$,$p_{e}(t)$,$e^{k}(t)$
\ENSURE $u^{k}(t)$,$v^{k}(t)$
\STATE initialization
\WHILE{ not terminate}
\STATE Obtain $\mathbf{X}^{k+1}$ by solving \textbf{X-MIN}
\STATE Obtain $\mathbf{Y}^{k+1}$ by solving \textbf{Y-MIN}
\STATE $\mathbf{U}^{k+1} = \mathbf{U}^k+\mathbf{X}^{k+1}-\mathbf{Y}^{k+1}$
\STATE $k=k+1$
\ENDWHILE
\end{algorithmic}\label{alg:admm-rlra}
\end{algorithm}

\subsection{On the convergence of the algorithm}\label{sec:convergence}

The convergence of ADMM for the non-convex problem is an open problem \cite{boyd2011distributed}. Some positive results are obtained with some assumptions \cite{magnusson2014convergence,hongconvergence}. We provide some preliminary results in this section, regarding to the convergence and feasibility of Algorithm~\ref{alg:admm-rlra}. An assumption that the dual variable $\mathbf{U}^k$ converges is used in the theoretical analysis, which is also the assumption for the convergence analysis of ADMM for polynomial optimization\cite{jiang2014alternating}, non-negative matrix factorization \cite{zhang2010alternating} and non-negative matrix factorization \cite{xu2012alternating}.

\begin{lemma}\label{lemma:feasible}
If the dual variable $\mathbf{U}^k$ converges, the solution $\mathbf{X}^k$ in the $\mathbf{X}$ update of Algorithm~\ref{alg:admm-rlra} will approach feasibility of \textbf{RLRA}.
\end{lemma}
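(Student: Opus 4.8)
The plan is to decouple the two constraints of \textbf{RLRA} and track them separately through the iterates. The key observation is that the convex constraint $g(\mathbf{X})\leq 0$ is enforced \emph{exactly} at every iteration, whereas the rank constraint is only attained asymptotically, and the hypothesis that $\mathbf{U}^k$ converges is precisely what forces this asymptotic attainment.

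First I would record what each update guarantees by construction. Every iterate $\mathbf{X}^{k+1}$ is an optimal solution of \textbf{X-MIN}, which carries the constraint $g(\mathbf{X})\leq 0$; hence $g(\mathbf{X}^k)\leq 0$ for all $k$, and this half of feasibility needs no limiting argument. Symmetrically, every $\mathbf{Y}^{k+1}$ solves \textbf{Y-MIN} and therefore lies in the set $\mathcal{R}=\{\mathbf{Y}:\text{rank}(\mathbf{Y})\leq K\}$.

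Next I would exploit the $\mathbf{U}$ update. Rearranging $\mathbf{U}^{k+1}=\mathbf{U}^k+\mathbf{X}^{k+1}-\mathbf{Y}^{k+1}$ gives the primal residual $\mathbf{X}^{k+1}-\mathbf{Y}^{k+1}=\mathbf{U}^{k+1}-\mathbf{U}^k$. Since $\mathbf{U}^k$ converges by assumption, the right-hand side tends to $0$, so $\|\mathbf{X}^{k+1}-\mathbf{Y}^{k+1}\|_F\to 0$. Because $\mathbf{Y}^{k+1}\in\mathcal{R}$, the distance from $\mathbf{X}^{k+1}$ to $\mathcal{R}$ is bounded above by $\|\mathbf{X}^{k+1}-\mathbf{Y}^{k+1}\|_F$, which vanishes. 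Combining the vanishing distance to $\mathcal{R}$ with the exact satisfaction of $g(\mathbf{X}^k)\leq 0$ then shows that $\mathbf{X}^k$ approaches the full feasible region of \textbf{RLRA}.

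The main obstacle is conceptual rather than computational: the rank function is only lower semicontinuous, so $\|\mathbf{X}^{k+1}-\mathbf{Y}^{k+1}\|_F\to 0$ does \emph{not} let me conclude $\text{rank}(\mathbf{X}^k)\leq K$ in the limit, since an arbitrarily small perturbation can raise the rank. The resolution is to read ``approach feasibility'' in the same residual sense as in Theorem~\ref{theorem:ADMM_convex}: it is the distance of $\mathbf{X}^k$ to the feasible set, not the rank of any limit point, that tends to zero. I would state this interpretation explicitly so that the discontinuity of the rank does not undermine the claim, and I would note that the convergence of $\mathbf{U}^k$ enters only through the vanishing of the primal residual.
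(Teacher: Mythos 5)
Your argument is correct and follows essentially the same route as the paper: the primal residual $\mathbf{X}^{k+1}-\mathbf{Y}^{k+1}=\mathbf{U}^{k+1}-\mathbf{U}^k$ vanishes because $\mathbf{U}^k$ converges, one constraint is satisfied exactly at every iterate, and the other is inherited asymptotically from the companion variable. You are in fact more careful than the paper on two points: the paper's own proof swaps which constraint is enforced by which update of Algorithm~\ref{alg:admm-rlra} (as written, \textbf{X-MIN} enforces $g(\mathbf{X})\leq 0$ and \textbf{Y-MIN} the rank bound, exactly as you have it), and your observation that lower semicontinuity of the rank forces one to read ``approach feasibility'' as vanishing distance to the (closed) feasible set rather than exact feasibility at finite $k$ repairs a genuine looseness in the paper's assertion that $\mathbf{X}^k$ ``will satisfy the same condition when $k$ is large enough.''
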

\begin{proof}
Since $\mathbf{U}^k$ converges, $\mathbf{X}^k-\mathbf{Y}^k$ converges to 0. Since $\mathbf{Y}^k$ satisfies the constraint $g(\mathbf{X})\leq 0$ for all $k$, $\mathbf{X}^k$ will satisfy the same condition when $k$ is large enough. Meanwhile, $X^k$ satisfies the rank constraint for all $k$, then $\mathbf{X}^k$ is a feasible solution for \textbf{RLRA}.

The proof is completed.
\end{proof}

When we characterize the convergence of ADMM in Theorem~\ref{theorem:ADMM_convex}, the primal variables are not guaranteed to converge, but can oscillate in the \textit{optimal} region. For the non-convex problem, because we do not know whether the objective function will converge or not, the convergence of the primal variable is more important, which we will discuss next.

Since $\mathbf{U}^k$ converges $\bar{\mathbf{U}}$, $\mathbf{X}^k-\mathbf{Y}^k$ converges to 0, then the $\mathbf{X}$ update can be denoted  as
$$\mathbf{X}^{k+1} = \underset{\text{rank}(\mathbf{X})\leq K}{\text{argmin}}\|\mathbf{X}-\frac{1}{1+\frac{\rho}{2}}\left(\hat{X}+\frac{\rho}{2}(\mathbf{X}^k-\mathbf{U}^k)\right)\|_F^2.$$

Let $\mathbf{D} = \left(\hat{\mathbf{X}}-\frac{\rho}{2}\bar{\mathbf{U}}\right)$, it can be further simplified as
\begin{align*}
\mathbf{X}^{k+1} & = \underset{\text{rank}(\mathbf{X})\leq K}{\text{argmin}}\|\mathbf{X}-\frac{1}{1+\frac{\rho}{2}}\left(\frac{\rho}{2}\mathbf{X}^k+\mathbf{D}\right)\|_F^2\\
& = \underset{g(\mathbf{X})\leq 0}{\text{argmin}}\|\mathbf{X}-\left(\alpha\mathbf{X}^k+(1-\alpha)\mathbf{D}\right)\|_F^2, \quad \alpha = \frac{\rho}{2+\rho}\\
& = \mathcal{C}(\mathbf{X}^k).
\end{align*}

Based on this understanding, we have the following theorem to characterize its convergence.
\begin{theorem}
If the dual variable converges, the primal variable will converge, and when $k$ is large enough, we can have
$$\|\mathbf{X}^{k+2}-\mathbf{X}^{k+1}\|_F\leq \frac{\rho}{\rho+2}\|\mathbf{X}^{k+1}-\mathbf{X}^{k}\|_F.$$
\end{theorem}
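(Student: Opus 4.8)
The plan is to recognize the update map $\mathcal{C}$ as the composition of an affine contraction with a projection onto a convex set, and then exploit the fact that projections onto convex sets are nonexpansive. Concretely, I would write $\mathcal{C} = P_{\mathcal{S}} \circ \mathcal{A}$, where $\mathcal{A}(\mathbf{X}) = \alpha\mathbf{X} + (1-\alpha)\mathbf{D}$ is the affine combination appearing inside the $\text{argmin}$ (with $\alpha = \frac{\rho}{2+\rho} = \frac{\rho}{\rho+2}$), and $P_{\mathcal{S}}$ denotes the Frobenius-norm projection onto the convex set $\mathcal{S} = \{\mathbf{X} \mid g(\mathbf{X}) \leq 0\}$. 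The earlier lemma on \textbf{X-MIN} already identifies the $\mathbf{X}$ update as exactly this projection, so this factorization is legitimate and $\mathbf{X}^{k+1} = \mathcal{C}(\mathbf{X}^k)$ in the limiting regime.

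First I would bound the affine part. Since $\mathcal{A}(\mathbf{X}) - \mathcal{A}(\mathbf{X}') = \alpha(\mathbf{X} - \mathbf{X}')$, we immediately get $\|\mathcal{A}(\mathbf{X}) - \mathcal{A}(\mathbf{X}')\|_F = \alpha\|\mathbf{X} - \mathbf{X}'\|_F$; this is the only place the factor $\frac{\rho}{\rho+2}$ enters. Next I would establish the nonexpansiveness of $P_{\mathcal{S}}$, namely $\|P_{\mathcal{S}}(\mathbf{Z}_1) - P_{\mathcal{S}}(\mathbf{Z}_2)\|_F \leq \|\mathbf{Z}_1 - \mathbf{Z}_2\|_F$. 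This follows from the variational (obtuse-angle) characterization of the projection: for $i = 1,2$ and every $\mathbf{W} \in \mathcal{S}$, one has $\text{trace}\big((\mathbf{Z}_i - P_{\mathcal{S}}(\mathbf{Z}_i))(\mathbf{W} - P_{\mathcal{S}}(\mathbf{Z}_i))^H\big) \leq 0$, which holds precisely because $\mathcal{S}$ is convex. Substituting $\mathbf{W} = P_{\mathcal{S}}(\mathbf{Z}_2)$ in the inequality for $i=1$ and $\mathbf{W} = P_{\mathcal{S}}(\mathbf{Z}_1)$ for $i=2$, adding the two, and applying the Cauchy--Schwarz inequality yields the nonexpansive bound.

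Composing these two estimates gives $\|\mathcal{C}(\mathbf{X}) - \mathcal{C}(\mathbf{X}')\|_F \leq \|\mathcal{A}(\mathbf{X}) - \mathcal{A}(\mathbf{X}')\|_F = \alpha\|\mathbf{X} - \mathbf{X}'\|_F$. Setting $\mathbf{X} = \mathbf{X}^{k+1}$ and $\mathbf{X}' = \mathbf{X}^k$ and using $\mathbf{X}^{k+2} = \mathcal{C}(\mathbf{X}^{k+1})$, $\mathbf{X}^{k+1} = \mathcal{C}(\mathbf{X}^k)$ produces the claimed inequality
$$\|\mathbf{X}^{k+2} - \mathbf{X}^{k+1}\|_F \leq \frac{\rho}{\rho+2}\|\mathbf{X}^{k+1} - \mathbf{X}^k\|_F.$$
Since $\alpha = \frac{\rho}{\rho+2} < 1$, the map $\mathcal{C}$ is a contraction, so the successive differences decay geometrically, the sequence $\{\mathbf{X}^k\}$ is Cauchy, and it converges by the Banach fixed-point theorem.

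The main obstacle I anticipate is not the contraction argument itself, which is clean, but the justification of the exact recursion $\mathbf{X}^{k+1} = \mathcal{C}(\mathbf{X}^k)$. This identity is obtained by replacing $\mathbf{Y}^k$ with $\mathbf{X}^k$ and $\mathbf{U}^k$ with its limit $\bar{\mathbf{U}}$, which is only valid asymptotically under the standing assumption that $\mathbf{U}^k$ converges and hence $\mathbf{X}^k - \mathbf{Y}^k \to 0$; this is exactly what forces the qualifier ``when $k$ is large enough.'' A fully rigorous treatment would carry the perturbation terms $\mathbf{X}^k - \mathbf{Y}^k$ and $\mathbf{U}^k - \bar{\mathbf{U}}$ through the nonexpansiveness estimate and argue that the effective contraction factor differs from $\frac{\rho}{\rho+2}$ only by a quantity tending to zero, so that the stated bound holds in the tail of the iteration.
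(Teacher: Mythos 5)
Your proposal is correct and follows essentially the same route as the paper's proof: both identify $\mathbf{X}^{k+2}$ and $\mathbf{X}^{k+1}$ as projections of $\alpha\mathbf{X}^{k+1}+(1-\alpha)\mathbf{D}$ and $\alpha\mathbf{X}^{k}+(1-\alpha)\mathbf{D}$ onto the convex set $\{\mathbf{X}\mid g(\mathbf{X})\leq 0\}$ and invoke the nonexpansiveness of such projections to pull out the factor $\alpha=\frac{\rho}{\rho+2}$. The only differences are that you prove nonexpansiveness from the variational characterization where the paper simply cites it, and that you explicitly flag the asymptotic substitution $\mathbf{Y}^k\to\mathbf{X}^k$, $\mathbf{U}^k\to\bar{\mathbf{U}}$ that the paper performs implicitly.
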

\begin{proof}
$X^{k+2}$ is the projection of $\alpha X^{k+1}+(1-\alpha)D$ and $X^{k+1}$ is the projection of $\alpha X^{k}+(1-\alpha)D$, with the fact that the feasible region of the projections is convex, and projection onto a convex set is non-expansive \cite{hiriart2013convex} we can have
\begin{align*}
\|\mathbf{X}^{k+2}-\mathbf{X}^{k+1}\|_F&\leq \|\alpha X^{k+1}+(1-\alpha)D-\alpha X^{k}-(1-\alpha)D\|_F\\
& = \alpha\|X^{k+1}-X^{k}\|_F
\end{align*}
The proof is completed.
\end{proof}
We provide some remarks regarding this theorem. Firstly, the convergence of $X^k$ will guarantee that we can have a local optimal or stationary point. Secondly, the inequality indicates that the primal variable will converge exponentially and a smaller $\rho$ will lead to a faster convergence.


\subsection{Another ADMM}\label{sec:otherorder}
\subsubsection{Algorithm design}
In this part, we provide another ADMM algorithm, which considers different constraints compared with the algorithm we previously proposed, \textit{i.e.}, in X \textbf{update} we consider the rank constraint while in Y \textbf{update} we consider the convex constraint.

We denote $\tilde{f}(X) = \|X-\hat{X}\|_F+\mathcal{I}(X)$, $\tilde{g}(Y) = J(Y)$ and the augmented function with dual variable $U$ as
$$\tilde{L}_{\rho}(X,Y,U) = \tilde{f}(X)+\tilde{g}(Y)+\frac{\rho}{2}\|X-Y+U\|_F^2.$$
Following the ADMM procedure, we will have the following three updates in each iteration.
\begin{itemize}
\item $X$ \textbf{update}:
$$X^{k+1} = \text{argmin}_{X}\tilde{L}_{\rho}(X,Y^{k},U^{k}).$$
\item $Y$ \textbf{update}:
$$Y^{k+1} = \text{argmin}_{Y}\tilde{L}_{\rho}(X^{k+1},Y,U^{k}).$$
\item $U$ \textbf{update}:
$$U^{k+1} = U^k+X^{k+1}-Y^{k+1}.$$
\end{itemize}

\paragraph{The Subproblems}
By similar arguments, the optimization problem in $X$ update is equivalent to
\begin{subequations}
\begin{eqnarray*}\label{prob:ADMM}
\min_{X}&& \|X-\frac{1}{1+\frac{\rho}{2}}\left(\hat{X}+\frac{\rho}{2}(Y^k-U^k)\right)\|_F^2
\nonumber\\
\textrm{s.t.}
& & \text{rank}(X)\leq K.\nonumber
\end{eqnarray*}
\end{subequations}
which is to find a low-rank matrix to approximate $\frac{1}{1+\frac{\rho}{2}}\left(\hat{X}+\frac{\rho}{2}(Y^k-U^k)\right)$ and can be solved by truncated SVD.

The optimization problem in $\mathbf{Y}$ update is equivalent to
\begin{subequations}
\begin{eqnarray*}\label{prob:ADMM_Y}
 \min_{\mathbf{Y}}&& \frac{\rho}{2}\|\mathbf{X}^{k+1}-\mathbf{Y}+\mathbf{U}^k\|_F^2
\nonumber\\
\textrm{s.t.}
& & g(\mathbf{Y})\leq 0.\nonumber
\end{eqnarray*}
\end{subequations}

The problem is convex and it can be viewed as a projection of $\mathbf{X}^{k+1}+\mathbf{U}^k$ onto the convex set $\mathcal{S} = \{\mathbf{Y}|g(\mathbf{Y})\leq 0\}$. It can be solved efficiently or even have closed-form solutions.

\subsubsection{Convergence analysis}
The theoretical analysis is also based on the assumption that the dual variable will converge. With the same argument in Section~\ref{sec:convergence}, the value of primal variable is iteratively updated by the function $Y=\mathcal{H}(X)$, where
$$\mathcal{H}(X) = \underset{\text{rank}(\mathbf{Y})\leq K}{\text{argmin}}\|\mathbf{Y}-\left(\alpha\mathbf{X}+(1-\alpha)\mathbf{D}\right)\|_F^2, \quad \alpha = \frac{\rho}{2+\rho},\footnote{To make function $\mathcal{H}(\mathbf{X})$ well-defined, we assume that the minimizer of the $\mathbf{X}$ update is unique, which means that, in each iteration, $\sigma_K\neq \sigma_{K+1}$ holds for the singular value decomposition of $\left(\alpha\mathbf{X}^k+(1-\alpha)\mathbf{D}\right)$. This assumption is used in the sequel.}$$ \textit{i.e.}, $X^{k+1} = \mathcal{H}(X^k)$ An interpretation of the update is provide in Fig~\ref{fig:X_update}.
\begin{figure}
  \centering
  \includegraphics[width=0.5\columnwidth]{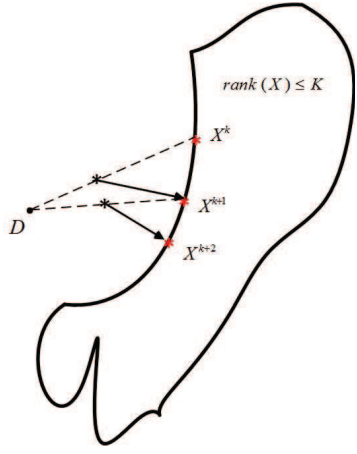}\\
  \caption{An illustration of $\mathbf{X}$ update}\label{fig:X_update}
\end{figure}

If the feasible approximation region is convex, the update rule will converge with $\|\mathbf{X}^{k+2}-\mathbf{X}^{k+1}\|_F\leq \alpha\|\mathbf{X}^{k+1}-\mathbf{X}^{k}\|_F$, but unfortunately, the set $\{X|\text{rank} \leq K\}$ is non-convex and the update is not guaranteed to converge. In the following, we provide some simple necessary conditions for the convergence of the primal variable.
Firstly, if $\mathbf{X}^k$ converges to $\mathbf{X}^*$, then $\mathbf{X}^*$ is a fixed point of $\mathcal{H}(X)$ \footnote{This result depends on the continuity of $\mathbf{H}(X)$ at $\mathbf{X}^*$, which requires a formal proof. We conjecture that $\mathcal{H}(X)$ is continuous at the points where it is well-defined.}, as shown in Proposition~\ref{proposition:continuity}.

\begin{proposition}\label{proposition:continuity}.
For any sequence $\mathbf{X}^k$ converging to $\mathbf{X}^*$ we will have
$$\mathbf{X}^* = \mathcal{H}(\mathbf{X}^*).$$
\end{proposition}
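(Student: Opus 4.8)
The plan is to avoid establishing the full continuity of $\mathcal{H}$ — which is delicate precisely because truncated SVD can jump when singular values collide — and instead argue directly from the variational (argmin) characterization of $\mathcal{H}$. Write $\mathbf{M}^k = \alpha\mathbf{X}^k+(1-\alpha)\mathbf{D}$ and $\mathbf{M}^* = \alpha\mathbf{X}^*+(1-\alpha)\mathbf{D}$, so that $\mathbf{X}^{k+1}=\mathcal{H}(\mathbf{X}^k)$ is by definition a best rank-$K$ approximation of $\mathbf{M}^k$. Since the affine map $\mathbf{X}\mapsto\alpha\mathbf{X}+(1-\alpha)\mathbf{D}$ is continuous, $\mathbf{X}^k\to\mathbf{X}^*$ gives $\mathbf{M}^k\to\mathbf{M}^*$; and because $\{\mathbf{X}^{k+1}\}$ is merely the index-shifted copy of the convergent sequence $\{\mathbf{X}^k\}$, it too converges to $\mathbf{X}^*$. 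The goal is then to show that this limit is itself a minimizer of $\|\mathbf{Y}-\mathbf{M}^*\|_F$ over rank-$K$ matrices.

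Two steps carry this out. First, I would show $\text{rank}(\mathbf{X}^*)\leq K$: every iterate $\mathbf{X}^{k+1}$ lies in the set $\{\mathbf{Y}:\text{rank}(\mathbf{Y})\leq K\}$, this set is closed, and $\mathbf{X}^{k+1}\to\mathbf{X}^*$, so the limit inherits the rank bound and is feasible for the $\mathbf{X}^*$ problem. Second, I would pass the optimality inequality to the limit. Fix any $\mathbf{Y}$ with $\text{rank}(\mathbf{Y})\leq K$; optimality of $\mathbf{X}^{k+1}$ for $\mathbf{M}^k$ gives
$$\|\mathbf{X}^{k+1}-\mathbf{M}^k\|_F\leq\|\mathbf{Y}-\mathbf{M}^k\|_F.$$
Letting $k\to\infty$ and using continuity of the Frobenius norm together with $\mathbf{X}^{k+1}\to\mathbf{X}^*$ and $\mathbf{M}^k\to\mathbf{M}^*$, both sides converge and the weak inequality is preserved, yielding $\|\mathbf{X}^*-\mathbf{M}^*\|_F\leq\|\mathbf{Y}-\mathbf{M}^*\|_F$ for every such $\mathbf{Y}$. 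Hence $\mathbf{X}^*$ attains the minimum, i.e. it is a best rank-$K$ approximation of $\mathbf{M}^*$.

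Finally, I would invoke the standing well-definedness assumption (the footnote's $\sigma_K\neq\sigma_{K+1}$ for the SVD of $\mathbf{M}^*$), under which the best rank-$K$ approximation is unique by the Eckart-Young-Mirsky Theorem; this forces $\mathbf{X}^*=\mathcal{H}(\mathbf{X}^*)$, as claimed. The main obstacle is exactly this uniqueness: without the gap assumption the argmin set may contain several matrices, and $\mathbf{X}^*$, though still a minimizer, need not equal the then set-valued $\mathcal{H}(\mathbf{X}^*)$. The appeal of this closed-graph argument is that it never requires $\mathcal{H}$ to be continuous — it uses only closedness of the rank constraint and continuity of the objective — so the conjectural continuity flagged in the footnote can be bypassed entirely, and the footnote's caveat reduces to the single clean hypothesis $\sigma_K\neq\sigma_{K+1}$.
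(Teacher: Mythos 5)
Your argument is correct, and it is worth noting that the paper does not actually supply a proof of this proposition: it only attaches a footnote conjecturing that $\mathcal{H}$ is continuous where it is well-defined, the intended (unwritten) argument being $\mathbf{X}^* = \lim_k \mathbf{X}^{k+1} = \lim_k \mathcal{H}(\mathbf{X}^k) = \mathcal{H}(\mathbf{X}^*)$, which hinges entirely on that conjectured continuity. Your closed-graph route is genuinely different and strictly more economical: by combining closedness of $\{\mathbf{Y} : \operatorname{rank}(\mathbf{Y})\leq K\}$ with passage to the limit in the optimality inequality $\|\mathbf{X}^{k+1}-\mathbf{M}^k\|_F\leq\|\mathbf{Y}-\mathbf{M}^k\|_F$, you establish that $\mathbf{X}^*$ is a best rank-$K$ approximation of $\mathbf{M}^*$ without ever needing $\mathcal{H}$ to be continuous, thereby discharging the paper's open conjecture rather than assuming it. The one hypothesis you do need --- and you correctly isolate it --- is the spectral gap $\sigma_K\neq\sigma_{K+1}$ for the SVD of $\mathbf{M}^*=\alpha\mathbf{X}^*+(1-\alpha)\mathbf{D}$ itself, so that Eckart--Young--Mirsky gives uniqueness of the argmin at the limit; the paper's footnote only asserts this gap at each iterate $\alpha\mathbf{X}^k+(1-\alpha)\mathbf{D}$, so your proof requires extending that standing assumption to the limit point, which you flag explicitly. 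One further caveat applies to both your proof and the paper's statement: the proposition as written says ``any sequence converging to $\mathbf{X}^*$,'' but the claim is only true for the orbit $\mathbf{X}^{k+1}=\mathcal{H}(\mathbf{X}^k)$ of the iteration, which is the reading your proof (correctly) adopts.
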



We can see that if $\mathbf{X}^k$ converges to $\mathbf{X}^*$, then $\mathbf{X}^*$ is a fixed point of $Y = \mathcal{H}(X)$. We define a set $\mathcal{F}$ as $$\mathcal{F} = \{X|\mathbf{X} = \mathcal{H}(\mathbf{X}), g(\mathbf{X})\leq 0\},$$
and provide a necessary condition for the convergence of $\mathbf{X}^k$ in Lemma~\ref{lemma:necessarycondition}.

\begin{lemma}\label{lemma:necessarycondition}
Under the condition that the dual variable $\mathbf{U}^k$ converges to $\bar{\mathbf{U}}$, if $\mathbf{X}^k$ converges to $\mathbf{X}^*$, then $\mathbf{X}^*\in\mathcal{F}$.
\end{lemma}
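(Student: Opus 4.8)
The plan is to check directly that the limit $\mathbf{X}^*$ meets both defining conditions of the set $\mathcal{F}$, namely the fixed-point equation $\mathbf{X}^* = \mathcal{H}(\mathbf{X}^*)$ and the convex feasibility $g(\mathbf{X}^*)\leq 0$. The first of these is nothing but the content of Proposition~\ref{proposition:continuity}: since $\mathbf{X}^k$ is assumed to converge to $\mathbf{X}^*$, that proposition immediately yields $\mathbf{X}^* = \mathcal{H}(\mathbf{X}^*)$, which in particular forces $\text{rank}(\mathbf{X}^*)\leq K$ because $\mathcal{H}$ always outputs a matrix of rank at most $K$. Hence the only work left is the feasibility condition.

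For the feasibility, I would mirror the dual-convergence argument used for Lemma~\ref{lemma:feasible}. From the $\mathbf{U}$ update $\mathbf{U}^{k+1}=\mathbf{U}^k+\mathbf{X}^{k+1}-\mathbf{Y}^{k+1}$ and the assumption $\mathbf{U}^k\to\bar{\mathbf{U}}$, the increment $\mathbf{U}^{k+1}-\mathbf{U}^k$ tends to zero, so $\mathbf{X}^k-\mathbf{Y}^k\to 0$. Combined with $\mathbf{X}^k\to\mathbf{X}^*$, this forces $\mathbf{Y}^k\to\mathbf{X}^*$ as well. In this variant of the algorithm it is the $\mathbf{Y}$ update that carries the convex constraint, so every iterate satisfies $g(\mathbf{Y}^k)\leq 0$. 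Since $g$ is convex and hence continuous, passing to the limit gives $g(\mathbf{X}^*)=\lim_k g(\mathbf{Y}^k)\leq 0$. Together with the fixed-point equation this yields $\mathbf{X}^*\in\mathcal{F}$, completing the argument.

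The main subtlety lies not in the feasibility step, which is routine once dual convergence is assumed, but in the fixed-point claim borrowed from Proposition~\ref{proposition:continuity}. That proposition rests on the continuity of $\mathcal{H}$ at $\mathbf{X}^*$, which the paper itself flags as a conjecture: the difficulty is that $\mathcal{H}$ projects onto the non-convex rank set, where the minimizer can jump when $\sigma_K=\sigma_{K+1}$. Since we are entitled to assume Proposition~\ref{proposition:continuity}, I would simply invoke it; a fully self-contained argument would instead lean on the standing uniqueness assumption $\sigma_K\neq\sigma_{K+1}$ to ensure that the truncated-SVD minimizer varies continuously near $\alpha\mathbf{X}^*+(1-\alpha)\mathbf{D}$, and then take limits in the asymptotic relation $\mathbf{X}^{k+1}=\mathcal{H}(\mathbf{X}^k)$ to recover $\mathbf{X}^*=\mathcal{H}(\mathbf{X}^*)$.
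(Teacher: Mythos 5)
Your proof is correct and follows essentially the route the paper intends: the paper states this lemma without a written proof, but the surrounding text makes clear it is meant to follow from Proposition~\ref{proposition:continuity} (for the fixed-point equation $\mathbf{X}^*=\mathcal{H}(\mathbf{X}^*)$) together with the Lemma~\ref{lemma:feasible}-style argument that dual convergence forces $\mathbf{X}^k-\mathbf{Y}^k\to 0$ and hence $g(\mathbf{X}^*)\leq 0$. You in fact spell out the feasibility step more carefully than the paper does, and you correctly flag that the whole argument inherits the unproved continuity assumption behind Proposition~\ref{proposition:continuity}.
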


Lemma~\ref{lemma:necessarycondition} is useful in the sense that, with the knowledge of $\mathbf{D}$, we can restrict our attention to $\mathbf{F}$. On one hand, if we observe that the dual variable converges, we do not need to keep updating since the only possible limiting points are in $\mathcal{F}$; on the other hand, if we find $\mathcal{F}$ is empty, then we can say that the primal variable will not converge and there is no need to spend more effort on updating.

\paragraph{the Fixed Points of $\mathcal{H}(X)$}

It turns out that the fixed points of $Y = \mathcal{H}(X)$ are highly related to the singular value decomposition of $\mathbf{D}$, which we will show in this section.

To warm up, we firstly provide one fixed point in Corollary~\ref{corrollary:one_fixed_point}.
\begin{corollary}\label{corrollary:one_fixed_point}
If $\tilde{\mathbf{X}}$ is the optimal solution of $\underset{\text{rank}(X)\leq K}{\min}\|X-D\|_F$ by truncated SVD, then $\tilde{\mathbf{X}}=\mathcal{H}(\tilde{\mathbf{X}})$.
\end{corollary}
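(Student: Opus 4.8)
The plan is to exploit the fact that $\mathcal{H}$ merely performs a rank-$K$ truncated SVD of the convex combination $\alpha\mathbf{X}+(1-\alpha)\mathbf{D}$, and to show that feeding in $\tilde{\mathbf{X}}$ reproduces $\tilde{\mathbf{X}}$ precisely because $\tilde{\mathbf{X}}$ and $\mathbf{D}$ are built from the same left and right singular vectors. First I would write the SVD of the data matrix as $\mathbf{D}=\mathbf{U}\Sigma\mathbf{V}^H$ with $\Sigma=\text{diag}([\sigma_1,\dots,\sigma_n])$ and $\sigma_1\ge\cdots\ge\sigma_n\ge0$, and invoke Theorem~\ref{theorem:EYM} to write the rank-$K$ truncation as $\tilde{\mathbf{X}}=\mathbf{U}\Sigma_K\mathbf{V}^H$, where $\Sigma_K=\text{diag}([\sigma_1,\dots,\sigma_K,0,\dots,0])$.

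The key step is to form the argument of $\mathcal{H}(\tilde{\mathbf{X}})$ and observe that it admits a closed-form SVD. Since $\tilde{\mathbf{X}}$ and $\mathbf{D}$ share $\mathbf{U}$ and $\mathbf{V}$, I get
$$\alpha\tilde{\mathbf{X}}+(1-\alpha)\mathbf{D}=\mathbf{U}\big(\alpha\Sigma_K+(1-\alpha)\Sigma\big)\mathbf{V}^H,$$
and the inner diagonal matrix has entries $\sigma_i$ for $i\le K$ and $(1-\alpha)\sigma_i$ for $i>K$. I would then apply Theorem~\ref{theorem:EYM} a second time: the best rank-$K$ approximation of this matrix retains its $K$ largest singular values together with the corresponding columns of $\mathbf{U}$ and $\mathbf{V}$, and the result is exactly $\mathbf{U}\Sigma_K\mathbf{V}^H=\tilde{\mathbf{X}}$, yielding $\mathcal{H}(\tilde{\mathbf{X}})=\tilde{\mathbf{X}}$.

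The main obstacle, and the only point requiring genuine care, is verifying that the diagonal entries above remain arranged in nonincreasing order, so that the expression is a legitimate SVD and the ``top $K$'' singular values really are $\sigma_1,\dots,\sigma_K$. This reduces to checking $\sigma_K\ge(1-\alpha)\sigma_{K+1}$, which follows from $\alpha=\frac{\rho}{2+\rho}$ giving $1-\alpha=\frac{2}{2+\rho}\in(0,1]$ together with $\sigma_K\ge\sigma_{K+1}$; the tail entries $(1-\alpha)\sigma_{K+1}\ge\cdots\ge(1-\alpha)\sigma_n$ inherit their ordering from $\Sigma$. A secondary subtlety is uniqueness: under the standing assumption that the truncation step has a unique minimizer (equivalently, a gap between the $K$-th and $(K+1)$-th singular values of the combined matrix), $\mathcal{H}(\tilde{\mathbf{X}})$ is single-valued, so exhibiting $\tilde{\mathbf{X}}$ as a minimizer forces the equality; absent that gap, one concludes only that $\tilde{\mathbf{X}}$ is one admissible value of the $\text{argmin}$.
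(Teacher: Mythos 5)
Your proposal is correct and follows essentially the same route as the paper: write $\mathbf{D}=\mathbf{U}\Sigma\mathbf{V}^H$, observe that $\alpha\tilde{\mathbf{X}}+(1-\alpha)\mathbf{D}$ has the closed-form SVD with diagonal entries $\sigma_i$ for $i\le K$ and $(1-\alpha)\sigma_i$ for $i>K$, and truncate again. In fact you are slightly more careful than the paper, which asserts without comment that this expression is already a valid SVD; your explicit check that $\sigma_K\ge(1-\alpha)\sigma_{K+1}$ (and your remark on uniqueness of the minimizer) fills in exactly the detail the paper leaves implicit.
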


\begin{proof}
Let $\mathbf{D} = U\Sigma V^T$, we can obtain that the optimal solution is $\tilde{\mathbf{X}} = U\left[
       \begin{array}{cc}
         \Sigma_{1:K} & \mathbf{0} \\
         \mathbf{0} & 0 \\
       \end{array}
     \right] V^T$. Then,
\begin{align*}
\alpha \tilde{\mathbf{X}}+(1-\alpha)D &= \alpha U\left[
       \begin{array}{cc}
         \Sigma_{1:K} & \mathbf{0} \\
         \mathbf{0} & 0 \\
       \end{array}
     \right] V^T + (1-\alpha)U\Sigma V^T\\
& = U\left[
       \begin{array}{cc}
         \Sigma_{1:K} & \mathbf{0} \\
         \mathbf{0} & (1-\alpha)\Sigma_{K+1:N} \\
       \end{array}
     \right] V^T,
\end{align*}
which is the singular value decomposition of $\alpha \tilde{\mathbf{X}}+(1-\alpha)\mathbf{D}$. Then the value of $\mathcal{H}(\mathbf{X}^*)$ is $ U\left[
       \begin{array}{cc}
         \Sigma_{1:K} & \mathbf{0} \\
         \mathbf{0} & 0 \\
       \end{array}
     \right] V^T$.

The proof is completed.
\end{proof}

We note that even though $\tilde{\mathbf{X}}$ is a fixed point of $\mathbf{Y} = \mathcal{H}(\mathbf{X})$, it may not be a stationary point of $\mathbf{X}$ update because $\tilde{\mathbf{X}}$ may not satisfy $g(\mathbf{X})\leq 0$.

Next, we characterize all possible fixed points of $\mathcal{H}(X)$ in Corollary~\ref{corollary:all_fixed_points}.

\begin{corollary}\label{corollary:all_fixed_points}
Suppose $\mathbf{D}$ admits the singular value decomposition of $U \Sigma V^T$ with $\sigma_1\geq \sigma_2\geq ...\geq \sigma_n\geq 0$, and $\mathcal{I} = \{i_1,i_2,...,i_K\}$ is a subset of $\{1,2,...,n\}$ with size $K$. If for any $i\in \mathcal{I}$ and $j\notin \mathcal{I}$ we can have $\sigma_{i}\geq (1-\alpha)\sigma_{j}$, then $\tilde{\mathbf{X}} = \sum_{i\in\mathcal{I}}\sigma_iu_iv_i^T$ is a fixed point of $Y=\mathcal{H}(\mathbf{X})$. On the other hand, if $\tilde{\mathbf{X}}$ is a fixed point, then there must exists such a subset $\mathcal{I}$, such that $\tilde{\mathbf{X}} = \sum_{i\in\mathcal{I}}\sigma_iu_iv_i^T$.
\end{corollary}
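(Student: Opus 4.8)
The plan is to prove the two directions separately, reducing everything to the singular value decomposition of the matrix $\mathbf{M} := \alpha\tilde{\mathbf{X}} + (1-\alpha)\mathbf{D}$, since by definition $\mathcal{H}(\mathbf{X})$ is exactly the best rank-$K$ approximation of $\alpha\mathbf{X}+(1-\alpha)\mathbf{D}$, computed by the truncated SVD of Theorem~\ref{theorem:EYM}. Throughout I will use that $\alpha=\frac{\rho}{2+\rho}\in(0,1)$, so $1-\alpha>0$.

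For the forward (sufficiency) direction I would substitute $\tilde{\mathbf{X}} = \sum_{i\in\mathcal{I}}\sigma_i u_i v_i^T$ directly. Because $\tilde{\mathbf{X}}$ and $\mathbf{D}$ share the orthonormal singular vectors $\{u_i\}$ and $\{v_i\}$, a short computation gives
\[
\alpha\tilde{\mathbf{X}}+(1-\alpha)\mathbf{D} = \sum_{i\in\mathcal{I}}\sigma_i u_iv_i^T + \sum_{j\notin\mathcal{I}}(1-\alpha)\sigma_j u_jv_j^T,
\]
which is a genuine SVD of $\mathbf{M}$ since the left and right factors stay orthonormal and all coefficients are nonnegative. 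The hypothesis $\sigma_i\geq(1-\alpha)\sigma_j$ for every $i\in\mathcal{I}$, $j\notin\mathcal{I}$ guarantees that the $K$ largest singular values of $\mathbf{M}$ are precisely $\{\sigma_i:i\in\mathcal{I}\}$, so truncating to the top $K$ components returns $\sum_{i\in\mathcal{I}}\sigma_i u_iv_i^T=\tilde{\mathbf{X}}$, i.e. $\tilde{\mathbf{X}}=\mathcal{H}(\tilde{\mathbf{X}})$. This generalizes Corollary~\ref{corrollary:one_fixed_point} from the single index set $\{1,\dots,K\}$ to an arbitrary admissible $\mathcal{I}$.

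For the backward (necessity) direction, assume $\tilde{\mathbf{X}}=\mathcal{H}(\tilde{\mathbf{X}})$, so $\tilde{\mathbf{X}}$ is the top-$K$ truncation of $\mathbf{M}$; write $\mathbf{M}=\tilde{\mathbf{X}}+\mathbf{R}$, with $\mathbf{R}$ the discarded tail. The key algebraic identity is $\mathbf{R}=\mathbf{M}-\tilde{\mathbf{X}}=(1-\alpha)(\mathbf{D}-\tilde{\mathbf{X}})$, whence $\mathbf{D}=\tilde{\mathbf{X}}+\frac{1}{1-\alpha}\mathbf{R}$. Since $\tilde{\mathbf{X}}$ and $\mathbf{R}$ come from disjoint blocks of the SVD of $\mathbf{M}$, their column spaces are mutually orthogonal and so are their row spaces; consequently $\tilde{\mathbf{X}}+\frac{1}{1-\alpha}\mathbf{R}$ is itself a bona fide SVD of $\mathbf{D}$, sharing the singular vectors of $\mathbf{M}$, with the top-$K$ block left unscaled and the tail block scaled by $\frac{1}{1-\alpha}>0$. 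Matching this against the given SVD $\mathbf{D}=U\Sigma V^T$ identifies the singular triplets of $\tilde{\mathbf{X}}$ with $K$ of the triplets $(\sigma_i,u_i,v_i)$, producing the required subset $\mathcal{I}$ and $\tilde{\mathbf{X}}=\sum_{i\in\mathcal{I}}\sigma_i u_iv_i^T$.

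The main obstacle I anticipate is the non-uniqueness of the SVD when $\mathbf{D}$ (equivalently $\mathbf{M}$) has repeated singular values: matching the singular vectors of $\mathbf{M}$ with those of $\mathbf{D}$ is canonical only up to rotations within each repeated-value subspace. I would therefore phrase the identification at the level of singular \emph{subspaces} rather than individual vectors, or invoke the uniqueness clause of Theorem~\ref{theorem:EYM} together with the well-definedness assumption ($\sigma_K\neq\sigma_{K+1}$ for the relevant decomposition) to pin the top-$K$ subspace down. A secondary point to verify is that the orthogonality of the column and row spaces of $\tilde{\mathbf{X}}$ and $\mathbf{R}$ really does turn the sum $\tilde{\mathbf{X}}+\frac{1}{1-\alpha}\mathbf{R}$ into a valid SVD of $\mathbf{D}$; this is the step that converts the algebraic identity into the structural statement about the singular triplets of $\mathbf{D}$, and it is exactly where the non-expansiveness-style reasoning of the convex case breaks and the combinatorial choice of $\mathcal{I}$ enters.
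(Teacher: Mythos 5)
Your proposal is correct and follows essentially the same route as the paper: both reduce the fixed-point condition to comparing the SVD of $\alpha\tilde{\mathbf{X}}+(1-\alpha)\mathbf{D}$ with that of $\mathbf{D}$, observing that the tail block of singular values gets scaled by $1-\alpha$ (equivalently, that $\mathbf{D}$ recovers the form $\mathbf{U}\bigl[\begin{smallmatrix}\Sigma_{1:K} & 0\\ 0 & \Sigma_{K+1:n}/(1-\alpha)\end{smallmatrix}\bigr]\mathbf{V}^T$ up to a permutation restoring the decreasing order). Your write-up is in fact somewhat more complete than the paper's, which concentrates on the necessity direction and leaves sufficiency implicit, and your caveat about repeated singular values is a legitimate point that the paper glosses over.
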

\begin{proof}
Suppose $\tilde{\mathbf{X}}$ is a fixed point of $\mathcal{H}(\mathbf{X})$ and $\alpha \tilde{\mathbf{X}}+(1-\alpha)D$ admits the singular value decomposition of $\mathbf{U}\Sigma \mathbf{V}^T$. Then $\tilde{\mathbf{X}} = \mathbf{U}\left[
                                                                                               \begin{array}{cc}
                                                                                                 \Sigma_{1:K} & 0 \\
                                                                                                 0 & 0 \\
                                                                                               \end{array}
                                                                                             \right]\mathbf{V}^T$.
By substituting into $(1-\alpha)D+\alpha\tilde{\mathbf{X}} = \mathbf{U}\Sigma \mathbf{V}^T$ we can have
$$(1-\alpha)D + \alpha \mathbf{U}\left[\begin{array}{cc}
                                                                                                 \Sigma_{1:K} & 0 \\
                                                                                                 0 & 0 \\
                                                                                               \end{array}
                                                                                             \right]\mathbf{V}^T =  \mathbf{U}\Sigma \mathbf{V}^T.$$
Thus $$D = \mathbf{U}\left[\begin{array}{cc}
                                                                                                 \Sigma_{1:K} & 0 \\
                                                                                                 0 & \frac{\Sigma_{K+1:n}}{1-\alpha} \\
                                                                                               \end{array}
                                                                                             \right]\mathbf{V}^T.$$
However, to have the standard SVD of $D$, we need to permutate the diagonal elements of $\left[\begin{array}{cc}
                                                                                                 \Sigma_K & 0 \\
                                                                                                 0 & \frac{\Sigma_{K+1:n}}{1-\alpha} \\
                                                                                               \end{array}
                                                                                             \right]$ to make them decreasing and the corresponding columns of $\mathbf{U},\mathbf{V}$.

However, the diagonal elements of $\left[\begin{array}{cc}
                                                                                                 \Sigma_{1:K} & 0 \\
                                                                                                 0 & \Sigma_{K+1:n} \\
                                                                                               \end{array}
                                                                                             \right]$ are decreasing, which establishes the results in this corollary.
\end{proof}

We provide some remarks regarding this corollary.
\begin{itemize}
\item By Corollary~\ref{corollary:all_fixed_points}, it is easy to see that Corollary~\ref{corrollary:one_fixed_point} is true.
\item The number of fixed points is upper bounded by $n\choose{K}$ because we need to choose $K$ singular values to form one fixed point of $\mathcal{C}(X)$.
\item A larger value of $\alpha$ (meaning a larger $\rho$) will make the condition $\sigma_{i}\geq (1-\alpha)\sigma_{j}$ more difficult to satisfy and the possible fixed points will be fewer.
\end{itemize}

\section{Simulations}\label{sec:simulation}
In this section, we conduct some simulations to show the performance of the proposed algorithm. Our purpose is to investigate (i) the performance comparison with other algorithms under different scenarios, (ii) the impact of values of $\rho$, and (iii) the performance on a real application.

\subsection{Non-negative low-rank approximation}

Firstly, we consider a special case with the convex constraints being $\mathbf{X}_{i,j}\geq 0, \forall i,j$, \textit{i.e.}, the non-negative low-rank approximation problem.

Other than the algorithm we propose, there are two alternatives to solve this problem: Alternating Direction Projection (\textbf{ADP}) \cite{chu2003structured}, and Non-negative Matrix Factorization (\textbf{NMF}) \cite{markovsky2008structured}. We briefly discuss them here.

In each iteration of \textbf{ADP}, we first make a projection onto the rank-K set and then another projection onto the convex set $\{\mathbf{X}|g(\mathbf{X})\leq 0\}$, both of which can be solved efficiently. One advantage of \textbf{ADP} is that the algorithm is guaranteed to converge, but this is also its disadvantage, because it is easily to be trapped by a local optimal. In \textbf{NMF}, we firstly equivalently transform the rank-K constraints to the fact that $\mathbf{X} = \mathbf{A}\mathbf{B}, \mathbf{A}\in \mathbb{R}^{M\times K}, \mathbf{B}\in \mathbb{R}^{K\times N}$ and impose constraints $\mathbf{A}_{i,j}\geq 0, \mathbf{B}_{i,j}\geq 0$ to ensure that $\mathbf{X}_{i,j}\geq 0$, which is obviously more strict. Then we apply the ADMM algorithm propose in \cite{zhang2010alternating} to obtain the factorization.

In the experiments, the original data $\hat{\mathbf{X}}\in \mathbf{R}^{100\times 80}$ is randomly generate and $\rho = 5$ for the algorithm we propose. The objective values of different algorithms in different scenarios (different rank constraints) are shown in from Fig~\ref{fig:NLRA1} to Fig~\ref{fig:NLRA3}.

\begin{figure}
\begin{minipage}{0.31\columnwidth}
\includegraphics[width=0.9\columnwidth]{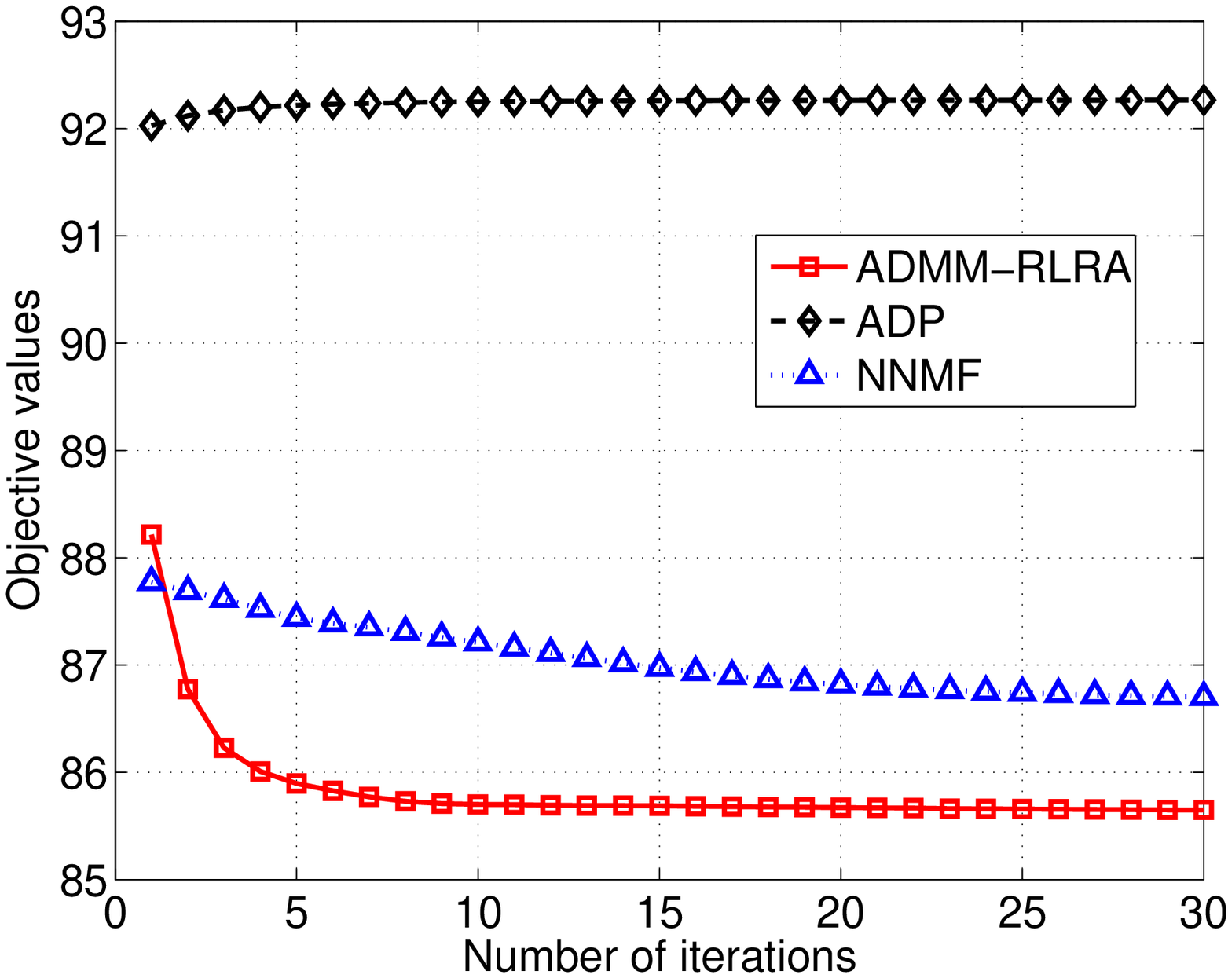}
\caption{Non-negative low-rank approximation with $K = 3$}\label{fig:NLRA1}
\end{minipage}
\begin{minipage}{0.31\columnwidth}
\includegraphics[width=0.9\columnwidth]{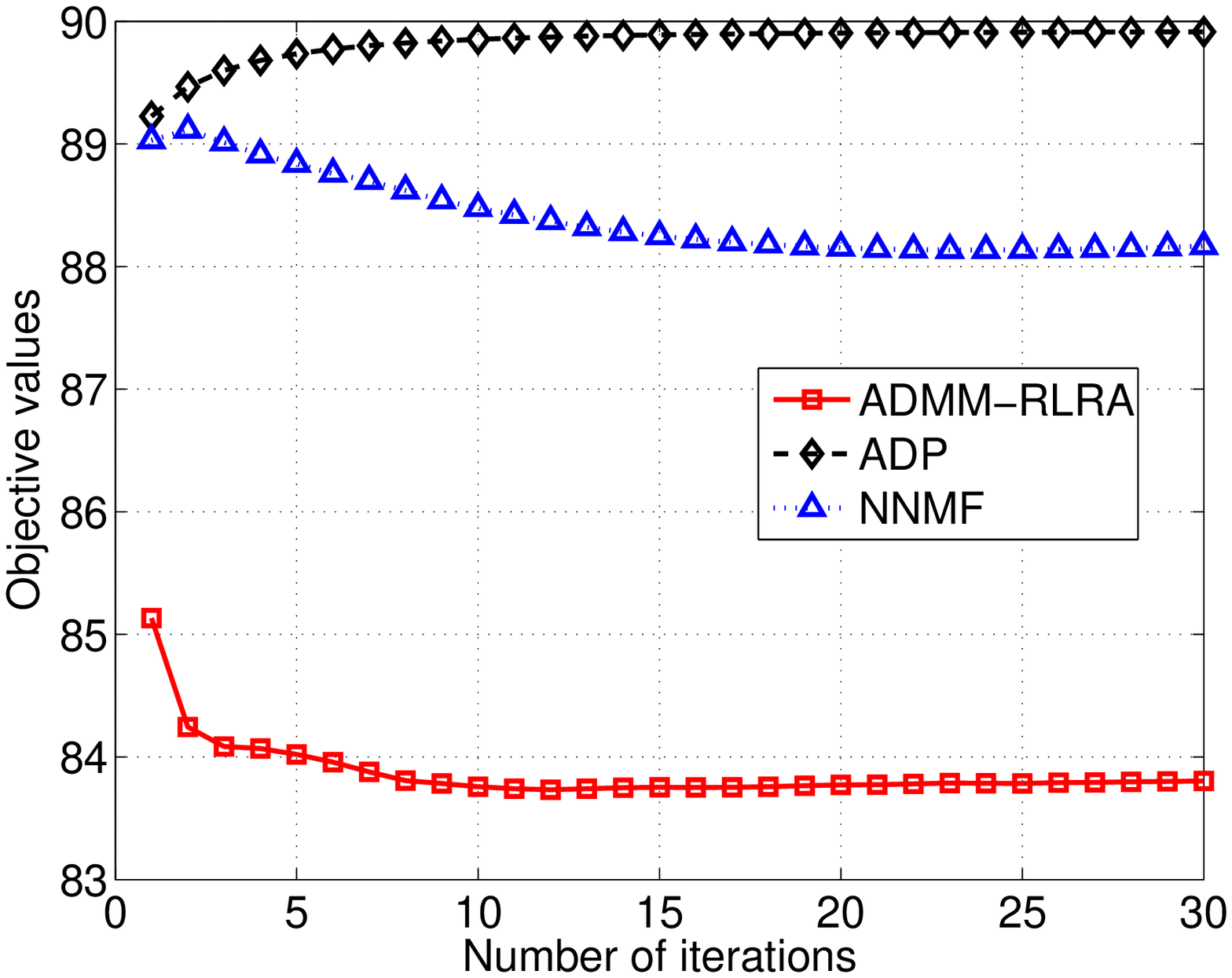}
\caption{Non-negative low-rank approximation with $K = 6$}\label{fig:NLRA2}
\end{minipage}
\begin{minipage}{0.31\columnwidth}
\includegraphics[width=0.9\columnwidth]{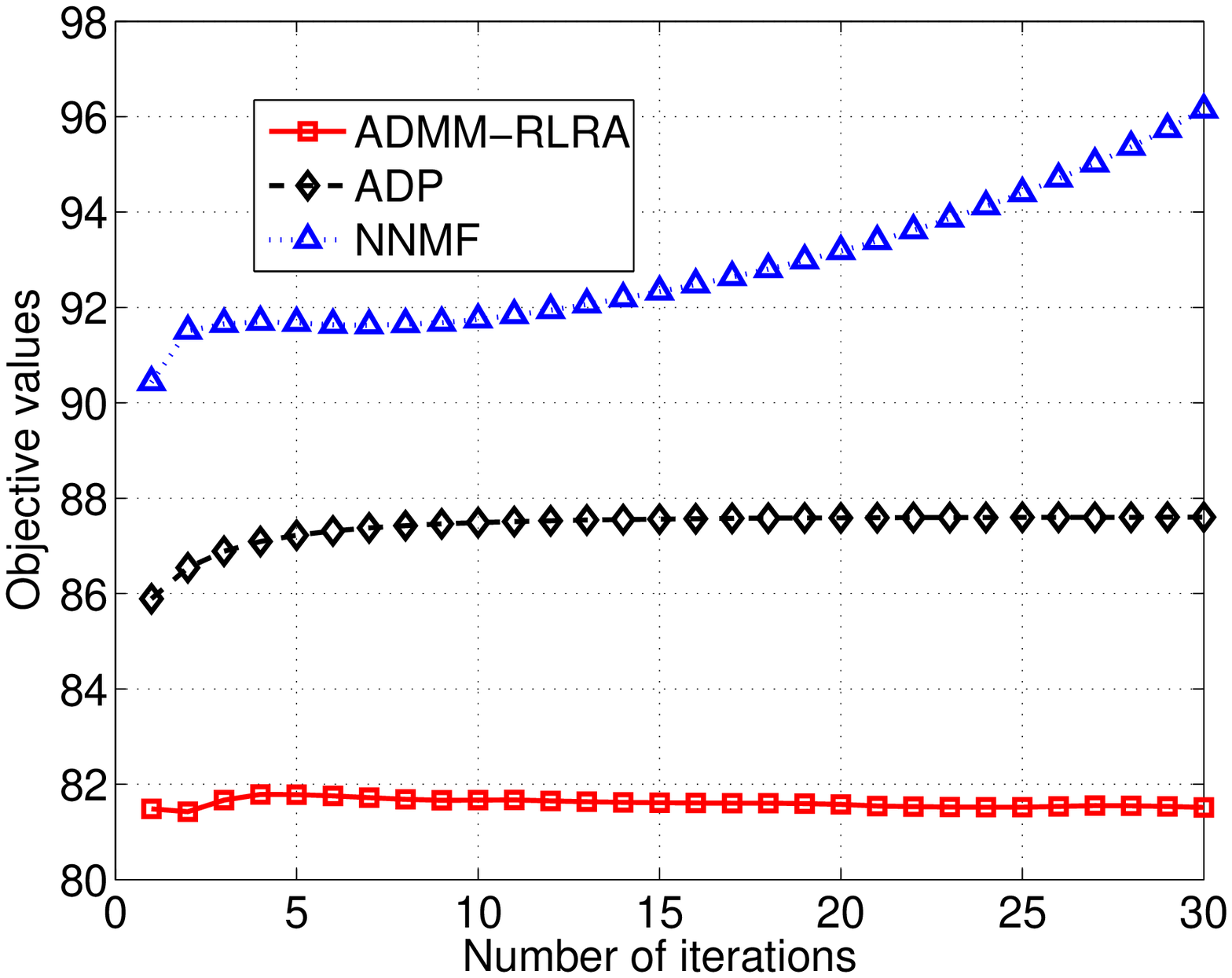}
\caption{Non-negative low-rank approximation with $K =10$}\label{fig:NLRA3}
\end{minipage}
\end{figure}

It can be seen that the performance of our proposed algorithm is always the best in three cases. \textbf{NMF} fails to converge when $K = 10$. Another observation is that the objective value of \textbf{AD} will remain the same after several iterations, the reason of which is that the optimization variable will not change once it is feasible. The reason that our algorithm outperforms \textbf{NMF} is that \textbf{NMF} requires $\mathbf{A}$ and $\mathbf{B}$ are both non-negative, which is more restrictive than the original problem.

\subsubsection{The impact of $\rho$}\label{sec:ExpRho}

We have little theoretical results on how the parameter $\rho$ will impact the performance and how to choose a proper value of $\rho$ for different problems. We provide some simulation results regarding to the convergence and performance here. In this simulation, we fix the data matrix $\hat{\mathbf{X}}$, keep $K =5$ and examine the residual value $\|\mathbf{X}-\mathbf{Y}\|_F$ and objective value for $\rho = 1,5,9,15$. The result is shown in Fig~\ref{fig:RhoResidual} \ref{fig:RhoObj}.

\begin{figure}
\begin{minipage}{0.45\columnwidth}
\includegraphics[width=0.9\columnwidth]{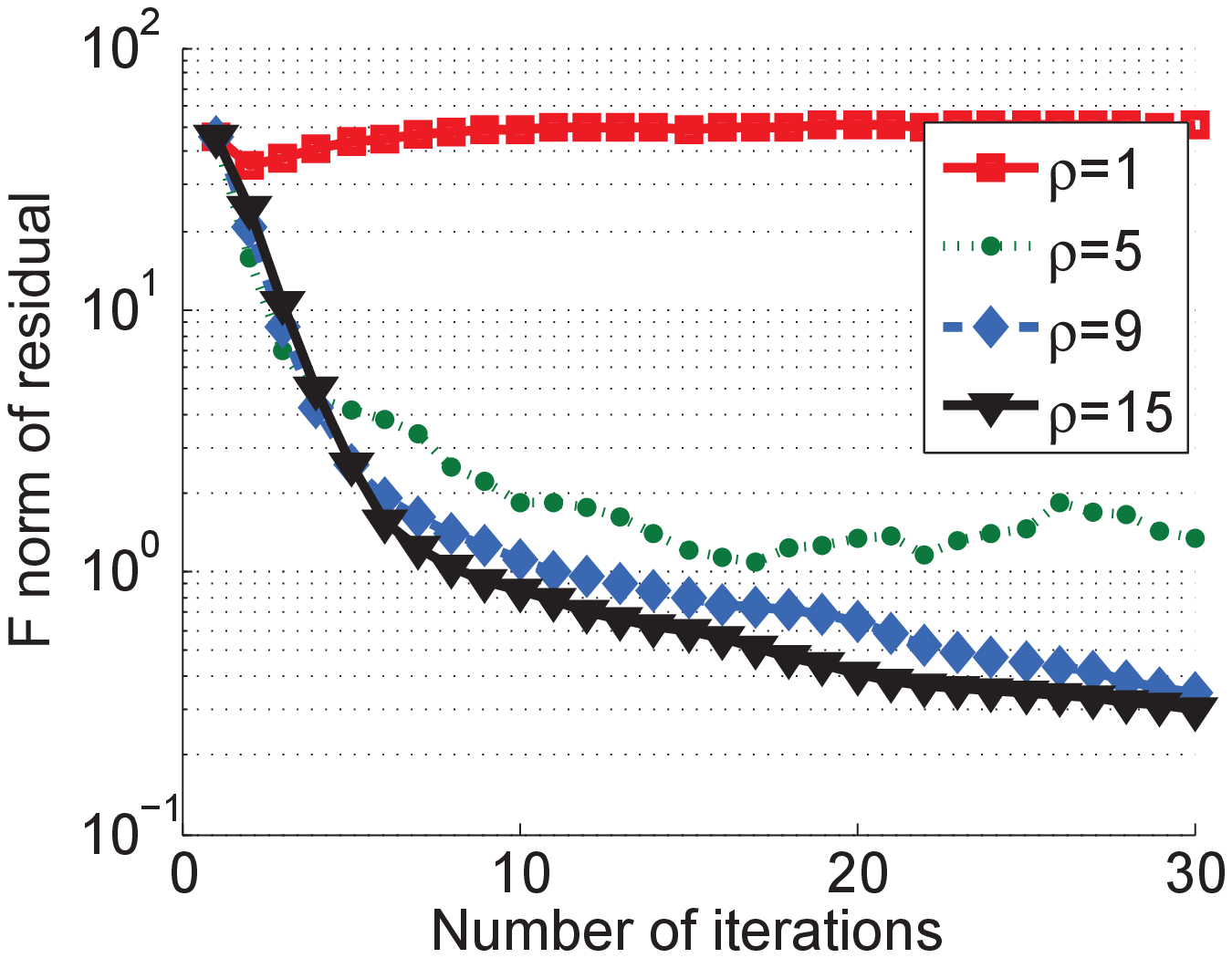}\\
\caption{Values of residual with different $\rho$}\label{fig:RhoResidual}
\end{minipage}
\begin{minipage}{0.45\columnwidth}
\includegraphics[width =0.9\columnwidth]{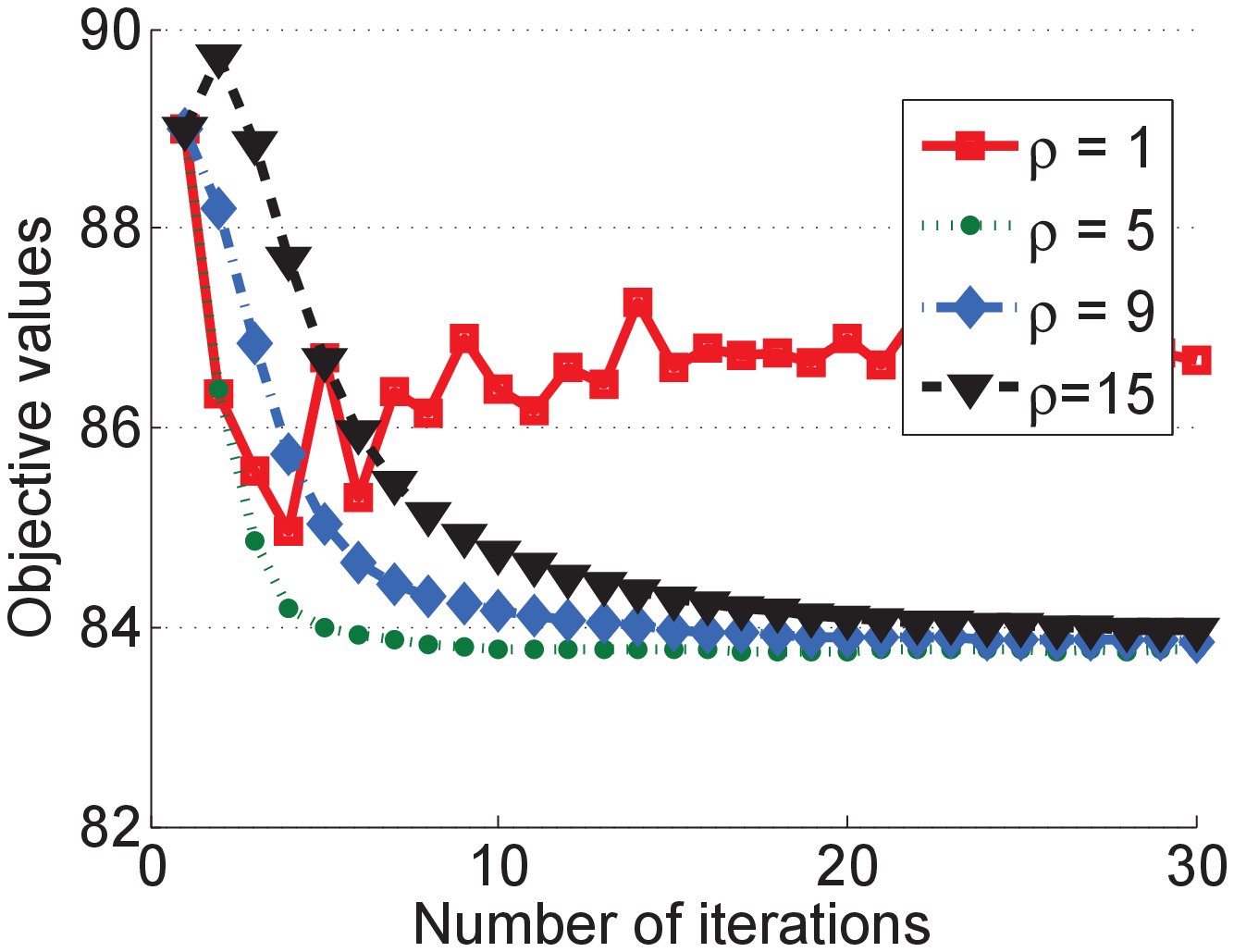}\\
\caption{Values of objective with different $\rho$}\label{fig:RhoObj}
\end{minipage}
\end{figure}

From Fig~\ref{fig:RhoResidual}, we can see that the residual value converges to $0$ when $\rho = 5,9,15$ and converges faster with a larger $\rho$, but fails to converge when $\rho=1$; on the other hand, as shown in Fig~\ref{fig:RhoObj}, for the scenarios when the algorithm converges, the objective value decreases more slowly if $\rho$ is larger. As we can see, a larger $\rho$ will make the algorithm more easily to converge, but will result in a worse performance, indicating that there is a underlying tradeoff, which we leave for future work.

\subsection{Image denoising by partial prior knowledge}

Next, we consider an application of image denoising. More precisely, we have a noisy image $\hat{\mathbf{X}}$ and two kinds of prior knowledge: (a) the original image is of low rank and the rank upper bound is $K$; (b) the original values of some pixels. And we want to obtain a clear image with less noise.

Singular value decomposition is shown useful in image denoising \cite{rajwade2013image} \cite{hou2003adaptive}. But direct truncated SVD cannot use the second kind of prior knowledge in the scenario we consider. Here we formulate an optimization problem in the form of \textbf{RLRA} with the convex constraint $g(\mathbf{X})\leq 0$ being $\mathbf{X}_{i,j} = x_{i,j}, (i,j)\in \mathcal{P}$, where $\mathcal{P}$ is the set of pixels with known values. We do the experiment on MIT logo (the rank of the original image is 5) with $\rho = 5$ and the values of $5\%$ of the pixels are known as a prior. The truncated SVD algorithm (\textbf{TSVD}) is used as a comparison. The original image, noised image, denoised image by \textbf{ADMM-RLRA} and denoised image by \textbf{TSVD} are shown from Fig~\ref{fig:mit} to Fig~\ref{fig:TSVDmit}. The image qualities measured by PSNR and SNR are shown in Table~\ref{table:imagequality}.

\begin{table}[h]
\centering
\caption{Image quality, PSNR, SNR}
\label{table:imagequality}
\begin{tabular}{|l|l|l|}
\hline
            & PSNR   & SNR    \\ \hline
Noisy Image & -3.504 & -4.895 \\ \hline
By \textbf{TSVD}     & 8.675  & 7.284  \\ \hline
By \textbf{RLRA}     & $\mathbf{12.895}$ & $\mathbf{11.504}$ \\ \hline
\end{tabular}
\end{table}

The simulation results shows that the second kind of prior knowledge is useful and \textbf{ADMM-RLRA} can use that to almost double the PSNR and improve the image quality significantly, which verifies the effectiveness of our algorithm.
\begin{figure}
\begin{minipage}{0.45\columnwidth}
\includegraphics[width=0.9\columnwidth]{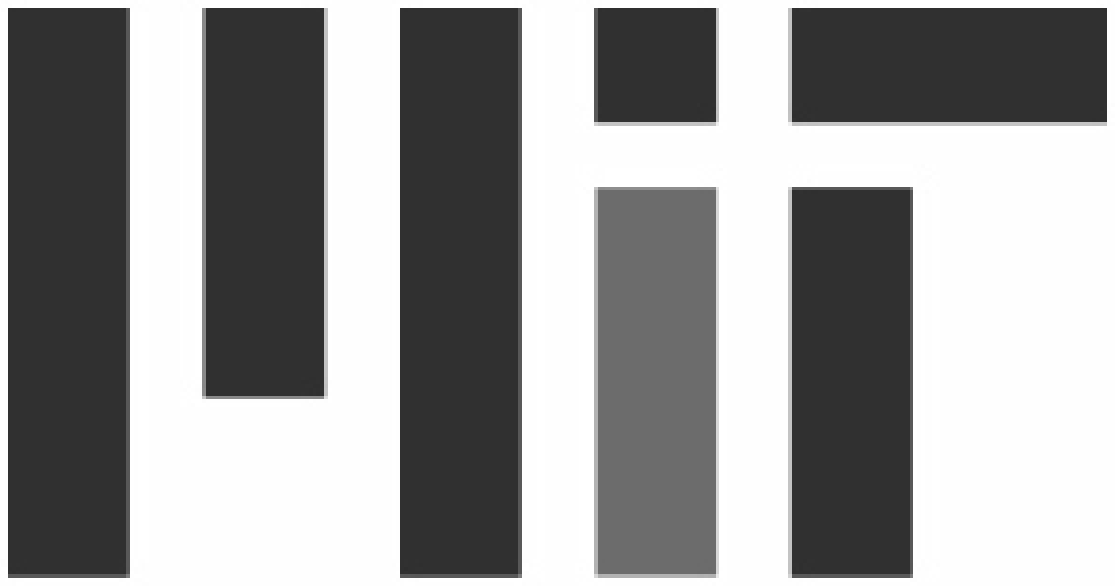}\\
\caption{Original image: MIT logo}\label{fig:mit}
\end{minipage}
\begin{minipage}{0.45\columnwidth}
\includegraphics[width =0.9\columnwidth]{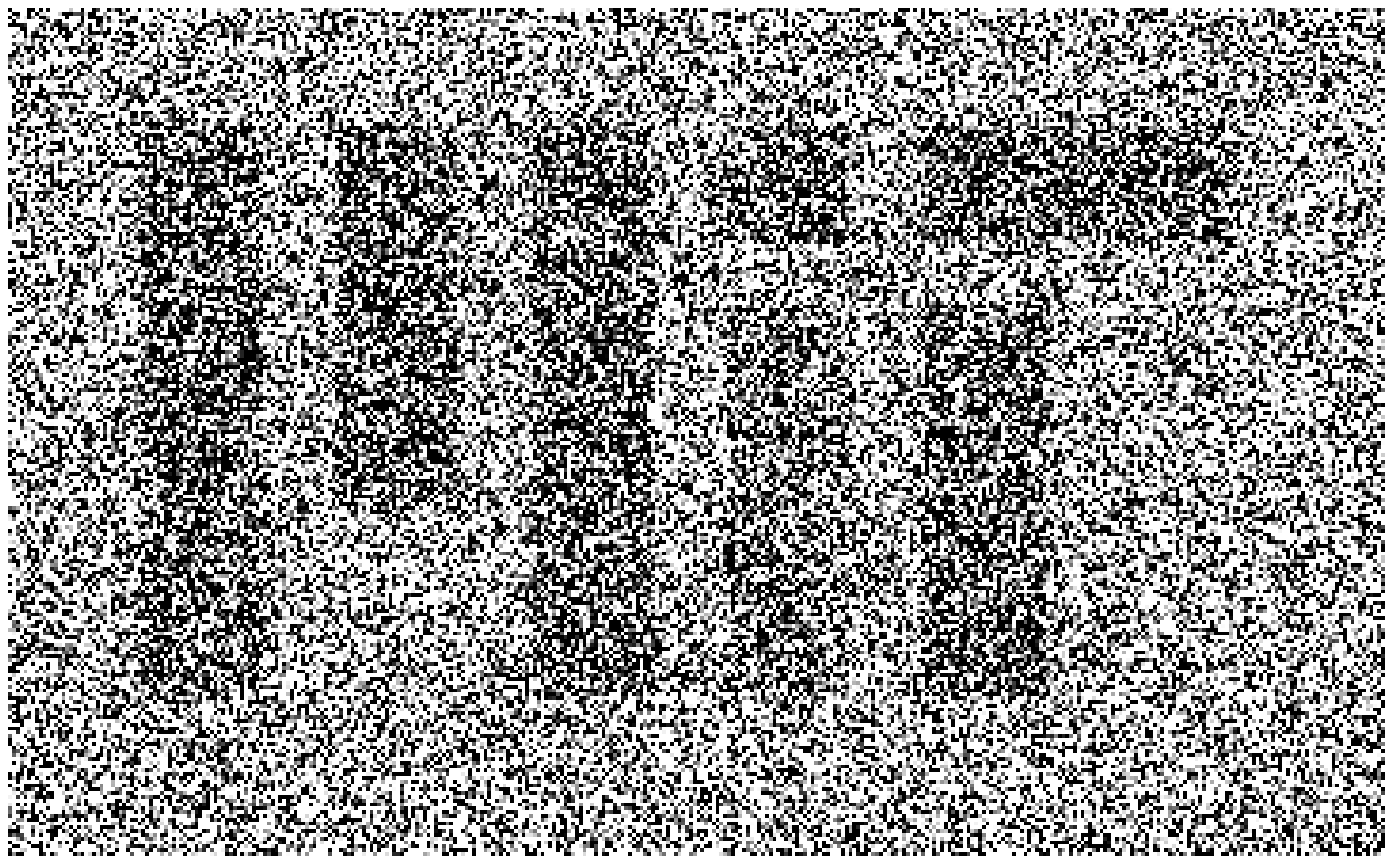}\\
\caption{Noisy image with Gaussian noise}\label{fig:noisedmit}
\end{minipage}
\end{figure}

\begin{figure}
\begin{minipage}{0.45\columnwidth}
\includegraphics[width=0.9\columnwidth]{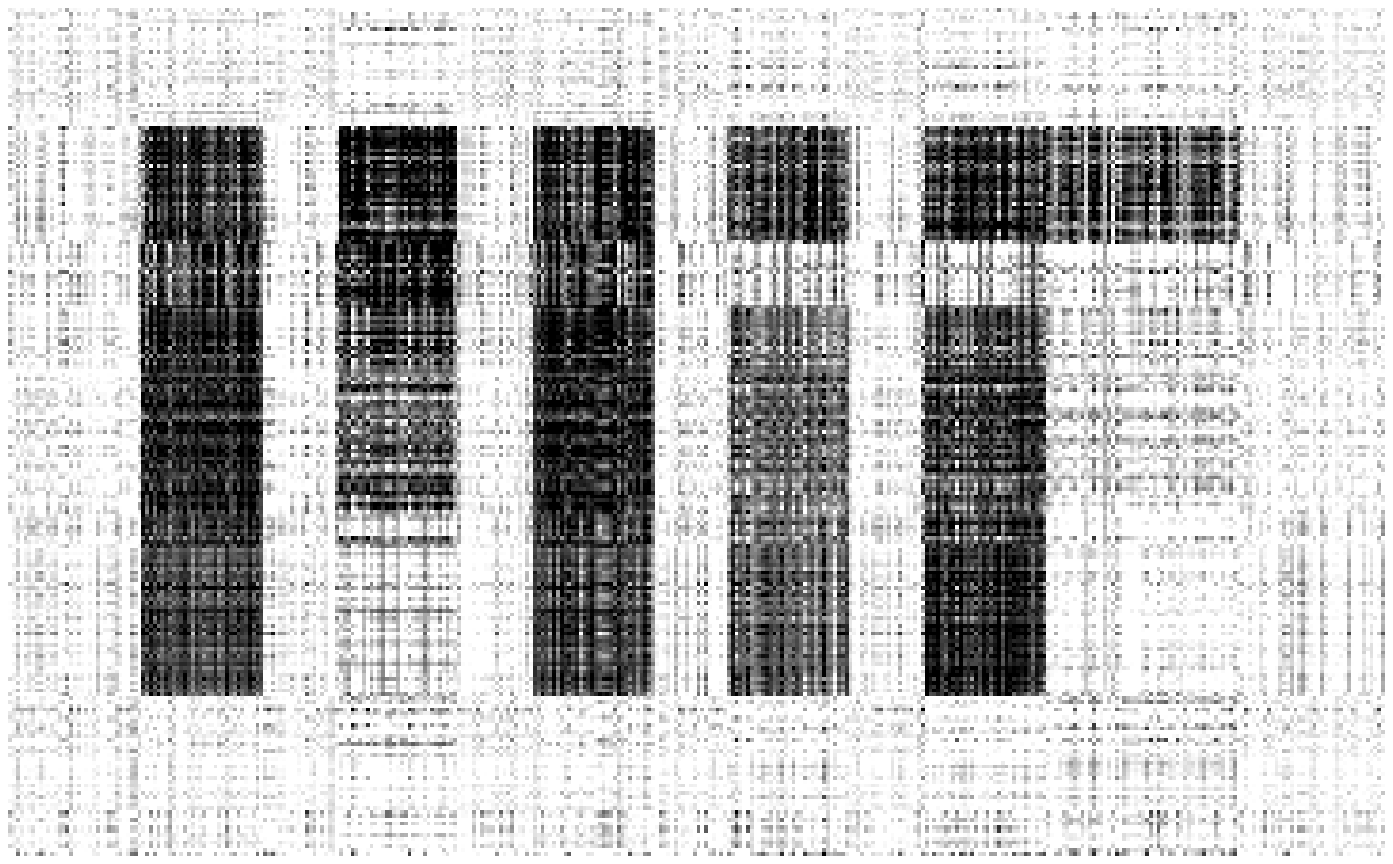}\\
\caption{Recovered image by \textbf{RLRA}}\label{fig:RLRAmit}
\end{minipage}
\begin{minipage}{0.45\columnwidth}
\includegraphics[width =0.9\columnwidth]{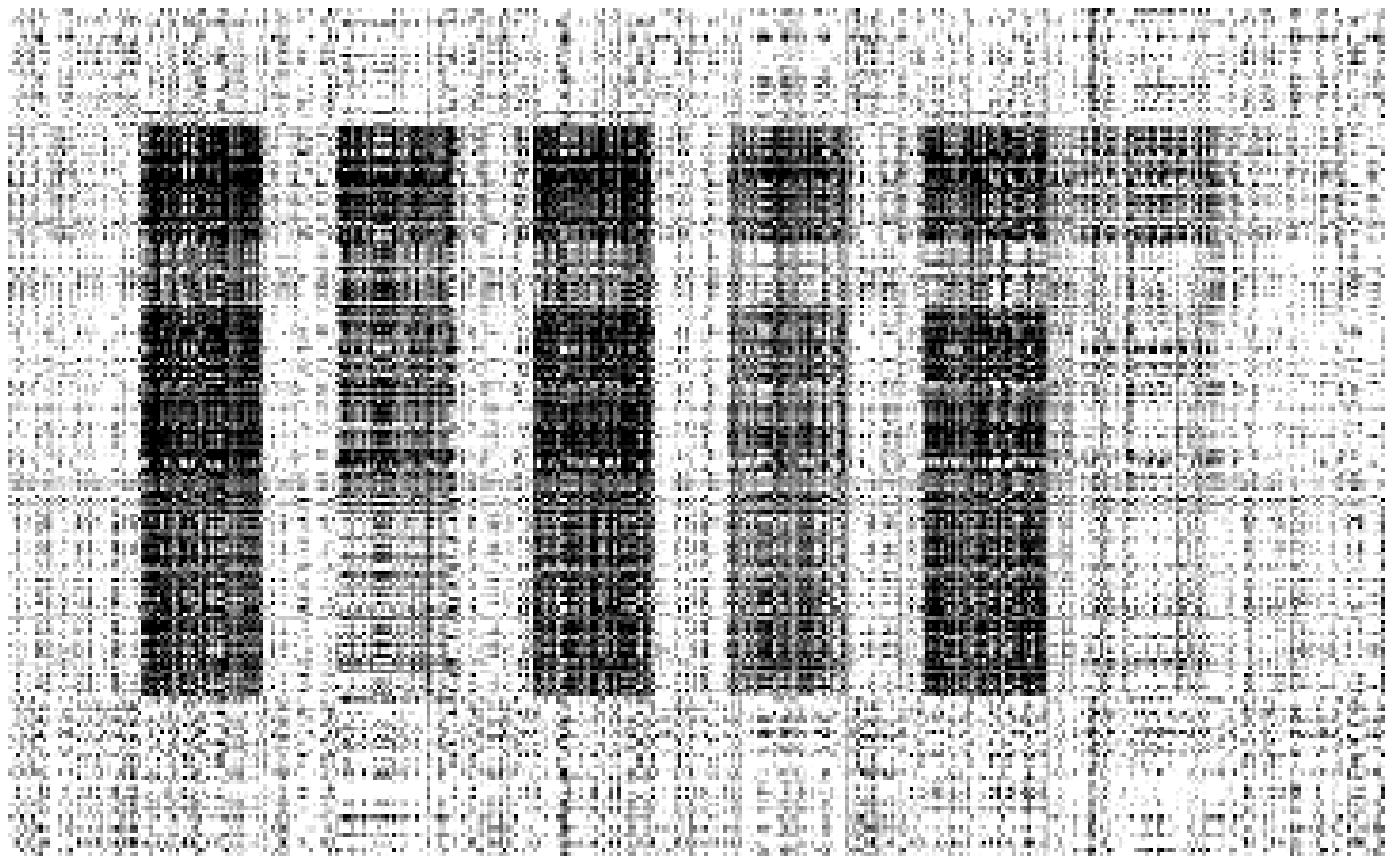}\\
\caption{Recovered image by \textbf{TSVD}}\label{fig:TSVDmit}
\end{minipage}
\end{figure}

\section{Conclusion and Future Work}\label{sec:conclusion}

This paper proposes to use ADMM algorithm to solve the restricted low-rank approximation problem. We show that all subproblems in ADMM can be solved efficiently. And more interestingly, under the condition that the dual variable converges, the update of primal variable is carried out by a function and the possible limiting points are its stationary points,.
We provide some preliminary theoretical results based on this understanding and show its performance by experiments.

The theoretical part in this paper is limited because most of them is based on the assumption that the dual variable converges, which is not guaranteed. The most important future work is to study under what condition the dual variable will converge. From Corollary~\ref{corollary:all_fixed_points} and experiments in Section~\ref{sec:ExpRho}, we can see that the parameter $\rho$ plays an important role and an interesting question is to ask how the value of $\rho$ will affect the convergence of this algorithm and how to choose a proper $\rho$.

There can be several different ways to reformulate the original problem and apply ADMM. As we mention in Section~\ref{sec:otherorder}, we can change the update order. And also, we can let $\mathcal{I}(X)$ be the indicator function for the convex constraints and $\mathcal{J}(Y)$ be the indicator function for the rank constraint. It is interesting to study how the different approaches will affect the solution theoretically and empirically. 
\bibliographystyle{abbrv}
\bibliography{ref}

\begin{thebibliography}{10}

\bibitem{berman2003completely}
A.~Berman and N.~Shaked-Monderer.
\newblock {\em Completely positive matrices}.
\newblock World Scientific, 2003.

\bibitem{bofill2001underdetermined}
P.~Bofill and M.~Zibulevsky.
\newblock Underdetermined blind source separation using sparse representations.
\newblock {\em Signal processing}, 81(11):2353--2362, 2001.

\bibitem{boyd2011distributed}
S.~Boyd, N.~Parikh, E.~Chu, B.~Peleato, and J.~Eckstein.
\newblock Distributed optimization and statistical learning via the alternating
  direction method of multipliers.
\newblock {\em Foundations and Trends{\textregistered} in Machine Learning},
  3(1):1--122, 2011.

\bibitem{chartrand2012nonconvex}
R.~Chartrand.
\newblock Nonconvex splitting for regularized low-rank+ sparse decomposition.
\newblock {\em Signal Processing, IEEE Transactions on}, 60(11):5810--5819,
  2012.

\bibitem{chu2003structured}
M.~T. Chu, R.~E. Funderlic, and R.~J. Plemmons.
\newblock Structured low rank approximation.
\newblock {\em Linear algebra and its applications}, 366:157--172, 2003.

\bibitem{donoho2006compressed}
D.~L. Donoho.
\newblock Compressed sensing.
\newblock {\em Information Theory, IEEE Transactions on}, 52(4):1289--1306,
  2006.

\bibitem{eckart1936approximation}
C.~Eckart and G.~Young.
\newblock The approximation of one matrix by another of lower rank.
\newblock {\em Psychometrika}, 1(3):211--218, 1936.

\bibitem{fazel2002matrix}
M.~Fazel.
\newblock {\em Matrix rank minimization with applications}.
\newblock PhD thesis, PhD thesis, Stanford University, 2002.

\bibitem{fazel2003log}
M.~Fazel, H.~Hindi, and S.~P. Boyd.
\newblock Log-det heuristic for matrix rank minimization with applications to
  hankel and euclidean distance matrices.
\newblock In {\em American Control Conference, 2003. Proceedings of the 2003},
  volume~3, pages 2156--2162. IEEE, 2003.

\bibitem{grant2008cvx}
M.~Grant and S.~Boyd.
\newblock Cvx: Matlab software for disciplined convex programming.

\bibitem{hiriart2013convex}
J.-B. Hiriart-Urruty and C.~Lemar{\'e}chal.
\newblock {\em Convex analysis and minimization algorithms I: fundamentals},
  volume 305.
\newblock Springer Science \& Business Media, 2013.

\bibitem{hongconvergence}
M.~Hong, Z.-Q. Luo, and M.~Razaviyayn.
\newblock Convergence analysis of admm for a family of nonconvex problems.

\bibitem{hou2003adaptive}
Z.~Hou.
\newblock Adaptive singular value decomposition in wavelet domain for image
  denoising.
\newblock {\em Pattern Recognition}, 36(8):1747--1763, 2003.

\bibitem{jiang2014alternating}
B.~Jiang, S.~Ma, and S.~Zhang.
\newblock Alternating direction method of multipliers for real and complex
  polynomial optimization models.
\newblock {\em Optimization}, 63(6):883--898, 2014.

\bibitem{lavaei2012zero}
J.~Lavaei and S.~H. Low.
\newblock Zero duality gap in optimal power flow problem.
\newblock {\em Power Systems, IEEE Transactions on}, 27(1):92--107, 2012.

\bibitem{lee2009convolutional}
H.~Lee, R.~Grosse, R.~Ranganath, and A.~Y. Ng.
\newblock Convolutional deep belief networks for scalable unsupervised learning
  of hierarchical representations.
\newblock In {\em Proceedings of the 26th Annual International Conference on
  Machine Learning}, pages 609--616. ACM, 2009.

\bibitem{low2013convex}
S.~H. Low.
\newblock Convex relaxation of optimal power flow: a tutorial.
\newblock In {\em Bulk Power System Dynamics and Control-IX Optimization,
  Security and Control of the Emerging Power Grid (IREP), 2013 IREP Symposium},
  pages 1--15. IEEE, 2013.

\bibitem{luo2010semidefinite}
Z.-Q. Luo, W.-K. Ma, A.-C. So, Y.~Ye, and S.~Zhang.
\newblock Semidefinite relaxation of quadratic optimization problems.
\newblock {\em Signal Processing Magazine, IEEE}, 27(3):20--34, 2010.

\bibitem{magnusson2014convergence}
S.~Magn{\'u}sson, P.~C. Weeraddana, M.~G. Rabbat, and C.~Fischione.
\newblock On the convergence of alternating direction lagrangian methods for
  nonconvex structured optimization problems.
\newblock {\em arXiv preprint arXiv:1409.8033}, 2014.

\bibitem{markovsky2008structured}
I.~Markovsky.
\newblock Structured low-rank approximation and its applications.
\newblock {\em Automatica}, 44(4):891--909, 2008.

\bibitem{needell2009cosamp}
D.~Needell and J.~A. Tropp.
\newblock Cosamp: Iterative signal recovery from incomplete and inaccurate
  samples.
\newblock {\em Applied and Computational Harmonic Analysis}, 26(3):301--321,
  2009.

\bibitem{rajwade2013image}
A.~Rajwade, A.~Rangarajan, and A.~Banerjee.
\newblock Image denoising using the higher order singular value decomposition.
\newblock {\em Pattern Analysis and Machine Intelligence, IEEE Transactions
  on}, 35(4):849--862, 2013.

\bibitem{recht2010guaranteed}
B.~Recht, M.~Fazel, and P.~A. Parrilo.
\newblock Guaranteed minimum-rank solutions of linear matrix equations via
  nuclear norm minimization.
\newblock {\em SIAM review}, 52(3):471--501, 2010.

\bibitem{scholkopf2002learning}
B.~Sch{\"o}lkopf and A.~J. Smola.
\newblock {\em Learning with kernels: Support vector machines, regularization,
  optimization, and beyond}.
\newblock MIT press, 2002.

\bibitem{starck2010astronomical}
J.-L. Starck and J.~Bobin.
\newblock Astronomical data analysis and sparsity: from wavelets to compressed
  sensing.
\newblock {\em Proceedings of the IEEE}, 98(6):1021--1030, 2010.

\bibitem{unserwavelets}
M.~Unser.
\newblock Wavelets, sparsity and biomedical image reconstruction.

\bibitem{xu2012alternating}
Y.~Xu, W.~Yin, Z.~Wen, and Y.~Zhang.
\newblock An alternating direction algorithm for matrix completion with
  nonnegative factors.
\newblock {\em Frontiers of Mathematics in China}, 7(2):365--384, 2012.

\bibitem{you2014non}
S.~You and Q.~Peng.
\newblock A non-convex alternating direction method of multipliers heuristic
  for optimal power flow.
\newblock In {\em Smart Grid Communications (SmartGridComm), 2014 IEEE
  International Conference on}, pages 788--793. IEEE, 2014.

\bibitem{zhang2010alternating}
Y.~Zhang.
\newblock An alternating direction algorithm for nonnegative matrix
  factorization.
\newblock {\em preprint}, 2010.

\bibitem{zhu2011estimating}
H.~Zhu and G.~Giannakis.
\newblock Estimating the state of ac power systems using semidefinite
  programming.
\newblock In {\em North American Power Symposium (NAPS), 2011}, pages 1--7.
  IEEE, 2011.

\end{thebibliography}

\end{document}